\let\arXiv\arxiv
\def\doi#1{ {\href{http://dx.doi.org/#1}
   {{\mdseries\ttfamily DOI}}}}
\let\MR\mr
\newcommand{\tH}{\tilde H}
\newcommand{\newsection}[1]
{\section{#1}\setcounter{theorem}{0} \setcounter{equation}{0}
\par\noindent}
\newtheorem{theorem}{Theorem}
\newtheorem{lemma}[theorem]{Lemma}
\newtheorem{proposition}[theorem]{Proposition}
\newtheorem{definition}[theorem]{Definition}
\newcommand{\cd}{\, \cdot\, }
\newcommand{\R}{{\mathbb R}}
\newcommand{\ang}{{\not\negmedspace\nabla}}
\renewcommand{\L}{{\mathcal{L}}}
\newcommand{\la}{\langle}
\newcommand{\ra}{\rangle}
\newcommand{\LE}{\mathcal{LE}}
\renewcommand{\S}{{\mathbb S}}
\newcommand{\tg}{{\tilde{g}}}
\newcommand{\Max}{\text{Max}}
\begin{document}

\title { Pointwise decay for the Maxwell field on black hole
  space-times }

\author{Jason Metcalfe} \address{Department of Mathematics, University
  of North Carolina, Chapel Hill, NC 27599-3250}
\email{metcalfe@email.unc.edu}

 \author{Daniel Tataru} \address{Department of Mathematics, University of California,
  Berkeley, CA 94720} \email{tataru@math.berkeley.edu} 

\author{Mihai  Tohaneanu} \address{Department of Mathematics, University of Kentucky, Lexington, KY 40506} \email{mihai.tohaneanu@uky.edu}

\thanks{The first author was supported in part by NSF grant
  DMS-1054289, the second author by DMS-1266182 as well as by a Simons Investigator award from the
Simons Foundation, and the third author by DMS-1636435.}

\begin{abstract}
  In this article we study the pointwise decay properties of solutions
  to the Maxwell system on a class of nonstationary asymptotically
  flat backgrounds in three space dimensions. Under the assumption
  that uniform energy bounds and a weak form of local energy decay
  hold forward in time, we establish peeling estimates, as well as a $t^{-4}$ rate of decay on compact regions for all the
  components of the Maxwell tensor. 
\end{abstract}

\maketitle

\newsection{Introduction} In this article we consider the question of
pointwise decay for solutions to the Maxwell system with localized
initial data.  The class of backgrounds we are interested in are
certain asymptotically flat black hole backgrounds, e.g of
Schwarzschild/Kerr type and perturbations thereof. However, the type
of results we obtain in this article treat a compact set essentially
as a black box, so they also apply in other settings.  Our interest in
this problem originates from general relativity, where the Maxwell (or
spin 1) system is a linearized model of the Einstein Equations that
captures some of the difficulties not present in the scalar wave
equation (or spin 0) case.

The main idea of this article is that the pointwise decay bounds are a
consequence of local energy decay estimates for the same Maxwell
system, even though the local energy decay bounds are invariant with
respect to time translations, while the pointwise decay bounds are
not. This fits into the philosophy that  the local energy decay estimates 
are the core decay estimates, and the other types of decay estimates 
(e.g. Strichartz, pointwise) are derived bounds. In the context of the 
Schr\"odinger equation on asymptotically flat space-times, this 
approach was developed in \cite{Tat}, \cite{MMT}. More recently,
the same philosophy was implemented in the context of the scalar 
wave  equation, beginning with \cite{MT}. The case of the scalar 
wave equation on black hole space times is discussed in what follows.

We begin with local energy estimates for solutions to the scalar wave
equation $\Box_g u = f$ on Schwarzschild and Kerr manifolds, which
have been recently established by various authors (\cite{BS1},
\cite{BS2}, \cite{BSter}, \cite{DaRoLE}, \cite{DaRo09}, \cite{MMTT} for
Schwarzschild, \cite{TT}, \cite{DaRoNotes}, \cite{AB} for Kerr with
small angular momentum, and \cite{DaRoNew}, \cite{DaRoNew2}, \cite{DaRoNew3} for
Kerr with $|a| < M$).  The transition from local energy decay to
Strichartz estimates was considered in \cite{MMTT}, \cite{To}.  The
key result that sharp decay bounds
(Price's Law \cite{Price1}) follow from the local
energy decay was first obtained in \cite{Tat} for stationary
space-times, using time Fourier transform and resolvent analysis, and
then in the nonstationary case in \cite{MTT}, by using more robust
methods based on the classical vector field method.
 (See also \cite{DSS}, \cite{DSS2} for a more refined  Fourier based analysis applied to
 Schwarzschild space-times.)

 The main result in the present article is the exact counterpart of
 \cite{MTT} in the context of the Maxwell system, and asserts that
 local energy decay implies sharp\footnote{At least for $r \geq \frac{t}2$; understanding what happens in the interior of a small cone 
 seems to be a more delicate matter.}  pointwise decay bounds. These can be
 seen as Price's law in the Maxwell setting; indeed, \cite{Price2} conjectures a decay rate of $t^{-5}$ in compact regions for the Maxwell system on the Schwarzschild metric.  
 
 Since our result is a conditional one, it is useful to review where we stand as far as
 local energy decay estimates are concerned.  With regards to the
 Maxwell system on Schwarzschild, a class of local energy estimates (as well as
 some partial pointwise rates of decay) were established in \cite{Blue} for solutions to the homogeneous system with no
 charge. For solutions to the homogeneous system on Kerr spacetimes
 with small angular momentum $|a|\ll M$ there is recent work
 \cite{AB2} that establishes some local energy estimates
 and uniform energy bounds.
 
 For the inhomogeneous system with charges, the article \cite{StTat}
 provides local energy estimates in a variety of spherically
 symmetric spacetimes, including Schwarzschild.  This is the context
 where the results in the present paper directly apply.  We expect the
 analogous estimates for the Kerr spacetimes and small perturbations
 thereof to also hold, in which case the same decay results would be
 true. We remark that the results in  \cite{AB2} cannot be used directly
in our present context, as they only deal with solutions to the homogeneous 
equation; however, if one had the appropriate inhomogeneous version  of the 
bounds in \cite{AB2}, that would suffice.  

 While there are substantial similarities between our present result
 for the Maxwell system and our earlier work \cite{MTT} for the scalar
 wave equation, there are also some significant differences. Some of
 these differences are of a technical nature and stem from the fact
 that we are dealing with a first order hyperbolic system rather than
 with a first order scalar wave equation. 

 However, there is also a significant conceptual difference, which is
 that even in the simplest stationary problem one has nontrivial zero
 modes to deal with.  These zero mode components are parametrized by
 the electric, respectively the magnetic charge, of the system, which
 are conserved quantities. In spherical symmetry the problem
 simplifies considerably in that these modes correspond exactly to
 the radial part of the Maxwell tensor, and thus can be easily
 eliminated. Instead of taking this easy way out, here we develop an
 approach that relies neither on the radiality nor on the
 stationarity of the metric.

A natural follow-up question to ask here would be whether our approach here 
generalizes to the spin 2 case, i.e. to linearized gravity. One obvious additional
difficulty there is that there are more zero modes present, and it is not clear to us 
whether these modes can be tracked in the same way as in the present paper,
given a general (i.e. non Kerr) black hole space-time.

\subsection{Acknowledgements}
The authors are very grateful to the anonymous referee for carefully reading the manuscript
and for many useful suggestions and corrections, including the need of condition \eqref{srderiv} for our theorem to hold.

\newsection{Notation and setup}

\subsection{Notations} We use $(t=x_0, x)$ for the
coordinates in $\R^{1+3}$. We use Latin indices $i,j=1,2,3$ for
spatial summation and Greek indices $\alpha,\beta=0,1,2,3$ for
space-time summation. In $\R^3$ we also use polar coordinates $x = r
\omega$ with $\omega \in \S^2$.  By $\la r \ra$ we denote a smooth
radial function which agrees with $r$ for large $r$ and satisfies $\la
r \ra \geq 2$.  We consider a partition of $ \R^{3}$ into the dyadic
sets $A_R= \{\la r \ra \approx R\}$ for $R \geq 1$, with the obvious
change for $R=1$.

\subsection{Space-times}
We are interested in uniformly smooth asymptotically flat Lorentzian
space-times $(M,g)$ in either $M = \R^+\times \R^3$ or an exterior region of the
form $M = \R^+\times \R^3 \setminus B(0,R_0)$.  To set a proper orientation for our space-time,
we make the following assumption:

$(i)$ The level sets $t = const$ are space-like.

To describe the regularity of the coefficients of the metric, we use the following sets of vector
fields:
\[
\partial = \{ \partial_t, \partial_i\}, \qquad \Omega = \{x_i \partial_j -
x_j \partial_i\}, \qquad S = t \partial_t + x \partial_x,
\]
namely the generators of translations, rotations and scaling. We set
$Z = \{ T,\Omega,S\}$.  Then we define the classes $S^Z(r^k)$ of
functions in $\R^+ \times \R^3$ by
\[
a \in S^Z(r^k) \Longleftrightarrow |Z^j a(t, x)| \leq c_{j} \la
r\ra^{k}, \quad j \geq 0.
\]
By $S^Z_{rad}(r^k)$ we denote spherically symmetric functions in
$S^Z(r^k)$. 

This leads us to our second main assumption.

$(ii)$  $(M,g)$ is asymptotically flat.

Here, for the purpose of the present paper, we make the
following definition:

\begin{definition}
We say that  $g$ is asymptotically flat if it has the form
\[
g = m + g_{sr} + g_{lr},
\]
where $m$ stands for the Minkowski metric, $g_{lr}$ is a stationary
long range spherically symmetric component, with $S^Z_{rad}(r^{-1})$
coefficients, of the form
\[
g_{lr} = g_{lr, tt}(r) dt^2 + g_{lr, tr}(r) dt dr + g_{lr, rr}(r) dr^2
+ g_{lr, \omega \omega}(r) r^2 d \omega^2
\]
and $g_{sr}$ is a short range component of the form
\[
g_{sr} = g_{sr, tt} dt^2 + 2g_{sr, ti} dt dx_i + g_{sr, ij} dx_i dx_j
\]
with $S^Z(r^{-2})$ coefficients which also satisfy
\begin{equation}\label{srderiv}
\partial g_{sr} \in S^Z (r^{-3}).
\end{equation}
\end{definition}

This definition is set to match the setup of relativistic space-times,
e.g. Schwarzschild and Kerr. In that context, the $O(r^{-1})$ radial 
part of the metric is associated to mass, while the $O(r^{-2})$ 
nonradial terms are associated to the angular momentum. 
Having accurate decay rates for the metric perturbation at infinity
is essential in this work; indeed, these decay rates, rather that the local behavior 
of the metric, are the factor which determines the exact decay rates 
for both scalar and electromagnetic waves. We also remark that, in
contrast with our previous result for the wave equation, we require that the derivative of the short range component gains one extra order of decay. This will prove to be crucial in gaining extra decay for the radial part in Section 6, as well as obtaining the correct bounds for the curvature tensor in Section 7.

Our decay results are expressed relative to the distance to the
Minkowski null cone $ \{t = |x|\}$. This can only be done provided
that there is a null cone associated to the metric $g$ which is within
$O(1)$ of the Minkowski null cone.  However, in general the long range
component of the metric produces a logarithmic correction to the
cone. This issue can be remedied via a change of coordinates that
roughly corresponds to using Regge-Wheeler coordinates in
Schwarzschild/Kerr near spatial infinity; see \cite{Tat}.
This is related to the fact that our asymptotic flatness condition is
stable with respect to a class of changes of coordinates $\chi$ of the
form
\[
\chi = \chi_{lr} + \chi_{sr}
\]
where $\chi_{lr}$ is radial and satisfies $\nabla \chi_{lr} - I \in
S^Z(r^{-1}) $ while $\nabla \chi_{sr} \in S^Z(r^{-2})$. This class
  allows for logarithmic cone corrections.  Indeed, after a further
  conformal transformation, the metric $g$ is
  reduced to a normal form where
\begin{equation}\label{normlr}
  g_{lr} = g_{\omega}(r) r^2 d \omega^2, \qquad g_\omega \in S^Z_{rad}(r^{-1}).
\end{equation}
In this context, we can replace $\Box_g$ by an operator
of the form
\begin{equation}\label{P}
 P = \Box + Q
\end{equation}
where $\Box$ denotes the d'Alembertian in the Minkowski metric and the
perturbation $Q$ has the form
\begin{equation}
Q  = g^{\omega} \Delta_{\omega} +
\partial_\alpha g_{sr}^{\alpha \beta}\partial_\beta + V,
\quad g_{sr}^{\alpha \beta} \in S^Z(r^{-2}), \  g^\omega\in
S^Z_{rad}(r^{-3}), \ V \in S^Z(r^{-3}).
\label{can-p-sr}\end{equation} 

See \cite{Tat} and \cite{MTT} for more details.

We call these coordinates normalized coordinates.  Most of the
analysis in the paper is done in normalized coordinates and with $g$
in normalized form.

Finally, concerning the local properties of the metric we make 
either one of the following assumptions:

$(iii)_a$ {\em (regular space-time)} $M = \R^+ \times \R^3$.

$(iii)_b$  {\em (black hole space-time)}  $M = \R^+ \times \R^3 \setminus B(0,R_0)$
and the lateral boundary $\R \times \partial B(0,R_0)$ is outgoing
space-like.

One could consider also other settings, e.g. exterior space-times $M =
\R^+ \times \R^3 \setminus \partial B(0,R_0)$ with various boundary
conditions on the time-like boundary $\R \times \partial B(0,R_0)$. 

\subsection{The Maxwell system.}
 
In spacetimes as above, we consider  a Maxwell field 
$F$, which is an antisymmetric $(0, 2)$-tensor field on a Lorentzian
manifold $(M , g)$ satisfying the Maxwell equations:
\begin{equation}\label{Maxwelleqns}
  dF = G_1, \qquad d*F = G_2. 
\end{equation}
In the physical context one  disallows magnetic currents and sets $G_1= 0$. However,
mathematically it is more convenient to work in a symmetric setting and 
allow both $G_1$ and $G_2$ to be nonzero.

We will assume that the initial data $F(0)$ at time $t= 0$ is smooth and compactly
supported. The inhomogeneous terms $G_1$ and $G_2$ should satisfy the
compatibility conditions
\[
dG_1 = dG_2 = 0,
\]
as well as be supported in the forward cone $C = \{ t \geq r - R_1\}$
for some $R_1>0$.

For comparison purposes, we also state the corresponding result for the scalar wave equation,
  \begin{equation}\label{box}
     \Box_g u = f
  \end{equation}   
with initial data $u[0] = ( u(0) , \partial_t u(0))$ at time $t = 0$. This is the problem 
considered in our preceding paper \cite{MTT}, to which we will refer repeatedly 
here.

\subsection{Local energy norms} 
We now introduce our local energy norms. For a scalar function $u$ we define
\begin{equation}
  \begin{split}
    \| u\|_{LE} &= \sup_R  \| \la r\ra^{-\frac12} u\|_{L^2 (\R^+ \times A_R)},  \\
    \| u\|_{LE[t_0, t_1]} &= \sup_R  \| \la r\ra^{-\frac12} u\|_{L^2 ([t_0, t_1] \times A_R)}, \\
    \| u(t_0, \cdot)\|_{\LE} &= \sup_R \| \la r\ra^{-\frac12} u(t_0,
    \cdot)\|_{L^2 (A_R)},
  \end{split}
  \label{ledef}\end{equation}
where the last norm applies at fixed time.  
Their $H^1$ counterparts were also used in \cite{MTT} in the study of
the scalar wave equation \eqref{box}:
\begin{equation}
  \begin{split}
    \| u\|_{LE^1} &= \| \nabla u\|_{LE} + \| \la r\ra^{-1} u\|_{LE},\\
    \| u\|_{LE^1[t_0, t_1]} &= \| \nabla u\|_{LE[t_0, t_1]} + \| \la
    r\ra^{-1} u\|_{LE[t_0, t_1]}, \\
  \| u(t_0)\|_{\LE^1} &= \| \nabla_x u (t_0,\cd)\|_{\LE} + \| \la r\ra^{-1} u(t_0,\cd)\|_{\LE}.
  \end{split}
\end{equation}
 Here and in the rest of the paper we use the abbreviation $\nabla = \nabla_{t,x}$.

The corresponding dual type spaces, used for the source terms, are:
\begin{equation}
  \begin{split}
    \| f\|_{LE^*} &= \sum_R  \| \la r\ra^{\frac12} f\|_{L^2 (\R_+ \times A_R)}, \\
    \| f\|_{LE^*[t_0, t_1]} &= \sum_R  \| \la r\ra^{\frac12} f\|_{L^2 ([t_0, t_1] \times A_R)}, \\
    \| f(t_0, \cdot)\|_{\LE^*} &= \sum_R \| \la r\ra^{\frac12}
    f(t_0,\cdot)\|_{L^2 (A_R)}.
  \end{split}
  \label{lesdef}\end{equation} 
  
 We also define similar norms for higher Sobolev
regularity 
\[
\begin{split}
  \| u\|_{LE^{1,k}} &= \sum_{|\alpha| \leq k} \| \partial^\alpha u\|_{LE^1}, \\
  \| u\|_{LE^{1,k}[t_0, t_1]} &= \sum_{|\alpha| \leq k} \| \partial^\alpha u\|_{LE^1[t_0, t_1]}, \\
  \| u\|_{LE^{k}} &= \sum_{|\alpha| \leq k} \| \partial^\alpha u\|_{LE}, \\
  \| u\|_{LE^{k}[t_0, t_1]} &= \sum_{|\alpha| \leq k} \| \partial^\alpha u\|_{LE[t_0, t_1]},
\end{split}
\]
respectively 
\[
\begin{split}
  \| f\|_{LE^{*,k}} &=  \sum_{|\alpha| \leq k}  \| \partial^\alpha f\|_{LE^{*}}, \\
  \| f\|_{LE^{*,k}[t_0, t_1]} &=  \sum_{|\alpha| \leq k}  \| \partial^\alpha f\|_{LE^{*}[t_0, t_1]}.
\end{split}  
\]

For a triplet $\Lambda = (i,j,k)$ of multi-indices $i$, $j$ and $k$ we
denote $|\Lambda| = |i|+3|j|+9k$ and
\[
u^{\Lambda} = \partial^i \Omega^j S^k u, \qquad u^{\leq m} =
(u^{\Lambda})_{|\Lambda|\le m}.
\]
The choice of weights here follows \cite{MTT}, but is somewhat arbitrary. The goal 
 is to enable us to treat all differentiated terms with better decaying coefficients  perturbatively in the $\Omega u$
equation, and to also treat  all $\Omega$ terms with better decaying coefficients  perturbatively in the $S u$
equation.

We also define, for any norm $Y$,
\[
\|u^{\leq m}\|_Y = \sum_{|\Lambda|\leq m} \|u^{\Lambda}\|_{Y}.
\]

In the case of black hole space times one also needs to contend with
trapping.  Fortunately, for our purposes here one does not need to pay
too much attention to that, and it suffices to use a rough regularity analysis.

\begin{definition}
  a) We say that the scalar wave evolution \eqref{box} has the local energy
  decay property if the following estimate holds:
 \begin{equation}\label{noderivloss}
  \| u\|_{LE^{1,k}[t_0,\infty)} \leq c_k (\|\nabla u(t_0)\|_{ H^{k}} + \|f\|_{LE^{*,k}[t_0,\infty)} ), 
\qquad k \geq 0.
\end{equation}
b) We say that the scalar wave evolution \eqref{box} has the weak local energy
  decay property if the following estimate holds:
  \begin{equation}\label{derivloss}
\| u\|_{LE^{1,k}[t_0,\infty)} \leq c_k (\|\nabla u(t_0)\|_{ H^{k+1}} + \|f\|_{LE^{*,k+1}[t_0,\infty)} ), 
\qquad k \geq 0
\end{equation}
  in either $\R \times \R^3$ or in the exterior domain (black hole) case.
  \label{d:weakle}\end{definition}

The first definition applies for the nontrapping case. The second one
is for the black hole case, where we allow for a loss of one
derivative to account for trapping effects. We remark that in  the presence of
hyperbolic trapping this loss is much more than is required. Indeed,
generally hyperbolic trapping merely produces a logarithmic loss, and
that only near the trapped set.  But that is not so relevant to our
purposes here, so we content ourselves with the more relaxed bound
\eqref{derivloss}.
 
We also give the following definition (see \cite{MTT} for the
motivation):
\begin{definition}\label{Enbddef}
  We say that the problem $\Box_g u = f$ satisfies stationary local
  energy decay bounds if on any time interval $[t_0,t_1]$ and $k \geq
  0$ we have
 \begin{equation}\label{sle}
\| u\|_{LE^{1,k}[t_0,t_1]} \lesssim_k \|\nabla u(t_0)\|_{H^k} + \|\nabla u(t_1)\|_{H^k}+ 
\| f\|_{LE^{*,k}[t_0,t_1]} + \|\partial_t u\|_{LE^{0, k}[t_0,t_1]}.
\end{equation}
\end{definition}

Let us also mention that all the definitions above can be easily
extended to a vector $\vec{u}$ of functions by considering each
component separately.

For the Maxwell tensor $F$, we need to slightly modify our energy
norms. Using Cartesian coordinates, define
\begin{equation}
  \begin{split}
    \| F\|_{LE} &= \sum_{\alpha,\beta}  \|  F_{\alpha\beta}\|_{LE},  \\
    \| F(t_0)\|_{\LE} &= \sum_{\alpha,\beta} \| F(t_0,
    \cdot)_{\alpha\beta}\|_{\LE},
  \end{split}
  \label{Mledef}\end{equation} 
and the dual norms
\begin{equation}
  \begin{split}
    \| G\|_{LE^*} &= \sum_{\alpha,\beta,\gamma}  \| G_{\alpha\beta\gamma}\|_{LE^*}, \\
    \| G(t_0)\|_{\LE^*} &= \sum_{\alpha,\beta,\gamma} \| G(t_0,
    \cdot)_{\alpha\beta\gamma}\|_{\LE^*}.
  \end{split}
  \label{Mlesdef}\end{equation}

We will also need to define higher energy norms of the tensor
$F$. Geometrically it makes the most sense to commute the system
\eqref{Maxwelleqns} with Lie derivatives of vector fields, which we
will denote by $\L_X$. Given a set of vector fields $A$, a norm $Y$, a
tensor $W$ and a positive integer $l$, define
\[
\|\L_A W\|_Y = \sum_{X\in A} \|\L_X W\|_Y
\]
\[
\L_{A^l} W = \{\L_{X_1} \cdots \L_{X_l} W: X_1 \cdots X_l \in A
\}.
\]

 Keeping the analogy with the scalar case, we also define the higher norms associated to translations
\[
 \|F\|_{LE^k} = \sum_{l\leq k} \| \L_{\partial^l} F\|_{LE}
\]
 and similarly for $\LE^k$ and their duals. We remark that the role of the $LE^1$ norms in the scalar case is now played by the $LE$ norms for the Maxwell system.
 
 On $\{t= t_0 \}$ slices, we define the higher regularity norm for $k\geq 0$
\begin{equation}\label{enonslices}
  E^{k}(t_0) = \sum_{l\leq k} \| \L_{\partial^l} F(t_0)\|_{L^2}, \qquad E(t_0) = E^0(t_0)
\end{equation}  

We will now distinguish between the radial and nonradial parts of the
tensor, as they will have different rates of decay. This is
  where things are different from the scalar case, and this is caused by the 
zero modes associated to the electric and magnetic charges. Precisely, 
with an $LE^*$ type source, one can drive up the charge inside the cone, and  
thus eliminate any chance for local energy decay. One remedy for this 
would be to factor out the charges. This works well for spherically symmetric space-times,
where the charges correspond exactly to radial modes, but in general this strategy seems 
to be unfeasible.

However, the radial mode does seem to carry the bulk of the charge 
near infinity. This motivates our present strategy, where we weigh the 
radial mode differently, in a way which is consistent with the estimates we already know   
from \cite{StTat} to hold in spherically symmetric space-times. Precisely, our 
stronger weight for the radial components of the Maxwell tensor are strong enough in order 
to guarantee that the charge remains zero at infinity. It also allows us to obtain the radial 
components by integrating the source term from infinity.

 For a function
$\psi$, we will denote by $\overline{\psi}$ its zero spherical
harmonic. Motivated by the fact that in the case of the Schwarzschild metric the zero modes are $d\omega^2$ and $r^{-2} dt\wedge dr$, we define
\[
\overline{F} = \overline{F_{tr}} dt \wedge dr +
\overline{F_{\phi\theta}} d\omega^2,
\]
% \[
% F^{nrd} = F - F^{rd}
% \]
respectively
\[
\overline{G} = \overline{G_{t\phi\theta}} dt \wedge d\omega^2 +
\overline{G_{r\phi\theta}} dr \wedge d\omega^2.
\]

We can now define the norms that we are mostly interested in
\[
\begin{split}
 \| F\|_{LE_{\Max}} &= \| F\|_{LE} + \| \la r\ra \overline F\|_{LE}, \\
 \| F(t_0)\|_{\LE_{\Max}} &= \| F(t_0,\cd)\|_{\LE} + \| \la r\ra \overline F(t_0,\cd)\|_{\LE},
\end{split} 
\]
and for the inhomogeneous part
\[
\begin{split}
 \| G\|_{LE_{\Max}^*} &= \| G\|_{LE^*} + \| \la r\ra \overline G\|_{LE^*}, \\
 \| G(t_0)\|_{\LE_{\Max}^*} &= \| G(t_0,\cd)\|_{\LE^*} + \|\la r\ra \overline G(t_0,\cd)\|_{\LE^*}.
\end{split} 
\]
Moreover, for a given $k\geq 0$, the higher
regularity norms associated with Sobolev regularity are set to be:
\[
\begin{split}
  \| F\|_{LE_{\Max}^{k}} &= \|F\|_{LE^k} + \|\la r\ra\bar F\|_{LE^k}  \\
  \| F(t_0)\|_{\LE_{\Max}^{k}} &= \|F(t_0)\|_{\LE^k} + \|\la r\ra\bar F(t_0)\|_{\LE^k}
\end{split}
\]
respectively
\[
\begin{split}
  \| G\|_{LE_{\Max}^{*,k}} &= \|G\|_{LE^{*,k}} + \|\la r\ra\bar G\|_{LE^{*,k}}     \\
  \| G\|_{\LE_{\Max}^{*,k}} &=  \|G(t_0)\|_{\LE^{*,k}} + \|\la r\ra\bar G(t_0)\|_{\LE^{*,k}}.  
\end{split}
\]
 Finally, for $\Lambda$ a triplet as above, let
\[
F^{\Lambda} = \L_{\partial^i} \L_{\Omega^j} \L_{S^k} F,
\]
and
\[
F^{\leq m} = (F^{\Lambda})_{|\Lambda|\le m}, \qquad \|F^{\leq m}\|_Y =
\sum_{|\Lambda|\le m} \|F^{\Lambda}\|_{Y}.
\]
 
 We now define the norms
\[
\begin{split}
 \| F^{\Lambda}\|_{LE_{\Max}} &= \| F^{\Lambda}\|_{LE} + \| \la r\ra \bar F^{\Lambda}\|_{LE}, \\
 \| F^{\Lambda}(t_0)\|_{\LE_{\Max}} &= \| F^{\Lambda}(t_0,\cd)\|_{\LE} + \| \la r\ra \bar F^{\Lambda}(t_0,\cd)\|_{\LE}, \\
 \| G^{\Lambda}\|_{LE_{\Max}^*} &= \| G^{\Lambda}\|_{LE^*} + \| \la r\ra \bar G^{\Lambda}\|_{LE^*}, \\
 \| G^{\Lambda}(t_0)\|_{\LE_{\Max}^*} &= \| G^{\Lambda}(t_0,\cd)\|_{\LE^*} + \|\la r\ra \bar G^{\Lambda}(t_0,\cd)\|_{\LE^*}.
\end{split} 
\]

We will assume that the following bounds hold:

\begin{definition}
a) We say that the problem \eqref{Maxwelleqns} has the local
  energy decay property if the following estimate holds for each $k \geq 0$:
  \begin{equation}\label{Mderivno}
   \sup_{t > t_0} E^k(t) +  \| F\|_{LE_{\Max}^{k}} \lesssim_k E^{k}(t_0) + \sum_{i=1}^2 \|G_i\|_{LE_{\Max}^{*,k}}.
  \end{equation}
b)   We say that the problem \eqref{Maxwelleqns} has the weak local
  energy decay property if the following estimate holds  for each $k \geq 0$:
  \begin{equation}\label{Mderivloss}
    \sup_{t > t_0} E^k(t) + \| F\|_{LE_{\Max}^{k}} \lesssim_k E^{k+1}(t_0) + 
\sum_{i=1}^2 \|G_i\|_{LE_{\Max}^{*,k+1}}.
  \end{equation}
\end{definition}

Similarly to the case of the scalar wave equation, the first
definition is adapted to the nontrapping case, while in the second we
allow for a loss of a derivative to account for possible trapped
geodesics.

We further comment on the choice of weights for the radial components,
by first noting that for radial metrics the radial components uncouple
and satisfy an equation which is essentially of the form $d (r^2 \bar
F) = r^2 \bar G$, and the charge at infinity is the limit of $r^2 \bar
F$.  Our weights require $\bar F$ to decay at least like $r^{-2}$ at
infinity, and $\bar G$ to decay at least like $r^{-3}$ at infinity,
with added integrability. Given the form of the radial equation,
this exactly suffices in order to guarantee that the charge remains zero at infinity,
and that $r^2 \bar F$ can be obtained by integrating $r^2 \bar G$ from infinity.

We also need an estimate similar to \eqref{sle} for the Maxwell
tensor. At least for stationary metrics it is clear that \eqref{sle}
is equivalent to a resolvent bound near zero frequencies.  As it turns
out, for our purposes here it is actually more efficient to work
directly with a zero frequency bound, even though our metric is
allowed to depend on time. We note that one could also harmlessly
carry out a similar substitution in the approach in \cite{MTT} for the
scalar wave equation, using the appropriate zero resolvent bound as
stated in \cite{Tat}. By analogy, we will refer to the estimate we
need as {\em the zero resolvent bound} for the Maxwell equation. To
state it we consider the fixed time operator $d^0$, acting on 2-forms,
which is obtained from $d$ by eliminating the time derivatives. In
other words, we define $d^0$ so that
\begin{equation}\label{d0def}
  d^0 F = dF - dt\wedge\L_{\partial_t} F.
\end{equation}
Then we consider the fixed time system
\begin{equation}\label{Maxwfixedt}
  d^0 F = G^0_1, \qquad d^0 * F = G^0_2. 
\end{equation}
 
\begin{definition}\label{MEresbound}
  We say that the problem \eqref{Maxwelleqns} satisfies the zero
  resolvent bound if on any time slice $t=t_0$ and for any $k \geq 0$, the
  system \eqref{Maxwfixedt} satisfies the following estimate:
  \begin{equation}\label{zero-res}
    \| F(t_0) \|_{\LE_{\Max}^{k}} \lesssim \sum_{i=1}^2 \|G^0_i (t_0)\|_{\LE_{\Max}^{*, k}}
  \end{equation}
for all $F$ so that the norm on the left is finite, and, in addition,
the following decay condition holds at infinity:
\begin{equation}\label{inf-bc}
\lim_{R \to \infty} \|1_{r > R} r \bar F(t_0)\|_{\LE} = 0.
\end{equation}
\end{definition}

We note that only the translation vector fields $\partial$ are used in
\eqref{Mderivloss} and \eqref{zero-res}. As part of our result, we
will prove that similar bounds hold for the vector fields $\Omega$ and
$S$. We also remark that for stationary metrics the bound \eqref{zero-res} follows from the
local energy decay estimates, in the same manner as in \cite{Tat}.

The requirement that $\bar F$ satisfies \eqref{inf-bc} is critical in
order to fix the charges to zero at infinity.

\newsection{The main result} 

For comparison purposes, we first state the similar result for the
scalar wave equation \eqref{box}, which was proved in \cite{MTT}:
 
 \begin{theorem}\label{mainscalar}
   Let $g$ be a metric which satisfies the conditions $(i)$, $(ii)$,
   $(iii)_a$ or $(i)$, $(ii)$, $(iii)_b$.  Assume that weak local energy decay and stationary local
   energy bounds hold for solutions to the wave equation \eqref{box}.
   Suppose $(u_0,u_1)$ and $f$ are supported inside the cone $C = \{ t
   \geq r - R_1\}$ for some $R_1>0$. Then for any fixed multi-index
   $\Lambda$ the following estimate holds in normalized coordinates
   for a large enough $m$:
   \begin{equation}
     | u^{\Lambda}(t,x)| \lesssim \kappa \frac{1}{\la t\ra \la t-r\ra^{2}}, 
\qquad | \nabla u^{\Lambda}(t,x)| 
     \lesssim \kappa \frac{1}{\la r \ra \la t-r\ra^{3}}
     \label{pointwiseestwave}
   \end{equation}
   where
   \[
   \kappa = \|\nabla u(0)\|_{H^m} + \| t^\frac52 f^{\leq m}\|_{LE^*}
   + \|\la r\ra t^\frac52 \nabla f^{\leq m}\|_{LE^*}.
   \]

 \end{theorem}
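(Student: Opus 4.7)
The plan is to follow the local-energy-decay-to-pointwise-decay strategy of \cite{Tat, MMT}, implemented for the wave equation in \cite{MTT}. The idea is to commute \eqref{box} with the full set $Z = \{\partial, \Omega, S\}$, upgrade weak local energy decay to $t$-weighted local energy bounds by exploiting the forward cone support of data and source, and conclude via a Klainerman-Sobolev type inequality on dyadic regions.

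First I would commute \eqref{box} with all monomials $\partial^i \Omega^j S^k$ up to a fixed order $m$. After passing to normalized coordinates, the asymptotic flatness condition gives $g - m \in S^Z(r^{-1}) + S^Z(r^{-2})$, so each commutator $[Z, \Box_g]$ is a first-order operator with $S^Z(r^{-1})$ coefficients. Inductively combining this with \eqref{derivloss}, one obtains for each fixed $m$,
\[
\|u^{\leq m}\|_{LE^{1,k}[0,\infty)} \lesssim \|\nabla u(0)\|_{H^{m+k+1}} + \|f^{\leq m}\|_{LE^{*,k+1}[0,\infty)}.
\]
The scaling field $S$ is the delicate one since it weights spatial derivatives by $r$, but the cone support of the data keeps $\|S^k u(0)\|_{L^2}$ under control by $\|\nabla u(0)\|_{H^{k}}$.

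Next, using the forward-cone support of $f$ and $u[0]$, a dyadic pigeonhole argument on time slabs $[2^j, 2^{j+1}]$ locates slices on which the local energy of $u^{\leq m}$ is small, and the stationary bound \eqref{sle} propagates this smallness forward within each slab. Coupled with the algebraic identity
\[
S = \tfrac12 (t+r)(\partial_t + \partial_r) + \tfrac12(t-r)(\partial_t - \partial_r),
\]
which exchanges $\la t - r\ra$-weights for $S$-derivatives away from the cone, this yields $t$-weighted local energy estimates
\[
\|\la t\ra^{5/2} u^{\leq m}\|_{LE^1} + \|\la r\ra \la t\ra^{5/2} \nabla u^{\leq m}\|_{LE^1} \lesssim \kappa
\]
up to the maximal weight permitted by the norms defining $\kappa$. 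Pointwise bounds then follow from a Klainerman-Sobolev type inequality on a dyadic region of size $\approx \la t - r\ra$ around $(t,x)$: Sobolev embedding converts the $L^2$ bound on $u^{\leq m + N_0}$ (for $N_0$ large enough depending on dimension) into the pointwise estimate, and the $\la t - r\ra^{-2}$ factor in \eqref{pointwiseestwave} is obtained by two further trades between $\la t - r\ra$-weights and vector-field derivatives using the identity above.

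The main obstacle is achieving the sharp $\la t - r\ra^{-2}$ rate rather than $\la t - r\ra^{-1}$. This requires pushing the multiplier/flux estimates to a critical threshold, which is only possible because the long-range part of $g$ is purely radial in normalized coordinates (so it commutes with $\partial_t$ and $\Omega$) and the short-range part decays at rate $r^{-2}$, giving two orders of integrability. In addition, the $\|\partial_t u\|_{LE^{0,k}}$ term on the right-hand side of \eqref{sle} must be absorbed via a long-time summation, exploiting the one power of $\la t\ra$ already extracted from the basic local energy decay; without this care the iteration would lose a logarithm and fail to yield the sharp power law.
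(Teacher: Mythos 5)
First, note that the paper does not prove Theorem \ref{mainscalar}; it quotes it from \cite{MTT} for comparison, and the proof strategy is mirrored in Section 7 for the Maxwell system. Measured against that argument, your outline has the right frame (commuting with $Z$, weak local energy decay, Klainerman--Sobolev on the dyadic regions) but contains a genuine gap in the two steps that actually produce the decay rates.

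The intermediate estimate you claim, $\|\la t\ra^{5/2} u^{\leq m}\|_{LE^1} + \|\la r\ra \la t\ra^{5/2}\nabla u^{\leq m}\|_{LE^1} \lesssim \kappa$, is false: it is inconsistent with the conclusion of the theorem itself. Near the cone, \eqref{pointwiseestwave} only gives $|u|\sim \kappa\, t^{-1}$, and on the block $\{T\le t\le 2T,\ |t-r|\lesssim 1\}$ (spacetime volume $\approx T^3$) one computes $\|\la r\ra^{-1}u\|_{L^2}\approx \kappa\, T^{-1}\cdot T^{3/2}\cdot T^{-3/2}$, i.e.\ the unweighted $LE^1$ norm of $u$ on that block is already of size $\kappa T^{-1}$, so inserting a $\la t\ra^{5/2}$ weight makes the norm diverge like $T^{3/2}$. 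The weight $t^{5/2}$ in $\kappa$ sits on the \emph{source} $f$, not on the solution. What the pigeonhole-in-time argument combined with the stationary bound \eqref{sle} actually yields (Propositions 3.14--3.15 of \cite{MTT}, cf.\ Proposition \ref{smallr} here) is an improvement confined to the interior region $r<t/2$: the factor $\la r\ra^{-1}$ in the first crude pointwise bound is upgraded to $\la t\ra^{-1}$ there, nothing more.

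Second, the mechanism for obtaining $\la t-r\ra^{-2}$ decay of $u$ itself is missing. The identity $2S=(t+r)(\partial_t+\partial_r)+(t-r)(\partial_t-\partial_r)$ trades a $\la t-r\ra$ weight for a vector field only on the \emph{derivative} $\partial u$ (it improves the bad derivative $\partial_t-\partial_r$); iterating it does not transfer decay back to $u$, since that requires an integration from the boundary of the support. In \cite{MTT} this is done by the one-dimensional reduction: representing the forward Minkowski solution by its positive fundamental solution, reducing to an integral of $\rho\,|\Box u^{\leq m}|$ over the characteristic rectangle $D_{tr}=\{0\le s-\rho\le t-r,\ t-r\le s+\rho\le t+r\}$, and feeding in the structure $\Box u^{\Lambda}\in S^Z(1)f^{\leq\cdot}+S^Z(r^{-1})\nabla u^{\leq\cdot}+\dots$ together with the previously obtained decay. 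Each pass through the loop (interior improvement via \eqref{sle}, then the 1D reduction near the cone, then the Klainerman--Sobolev derivative bounds) gains roughly half a power of $\la t-r\ra$, and the sharp rate is reached after finitely many iterations with the logarithms removed at the last step. Without this bootstrap your argument cannot get past $\la t-r\ra^{-1/2}$ for $u$ near the cone.
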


 We are now ready to state the main result of the paper. Consider the
 frame $(\partial_u, \partial_v, e_A, e_B)$, where as usual we set
 \[
 u = t-r, \qquad v = t+r
 \]
 and $(e_A, e_B)$ is an orthonormal frame of the unit sphere $(\S^2, d\omega)$. We have:
 
 \begin{theorem}\label{main}
   Let $g$ be a metric which satisfies the conditions $(i)$, $(ii)$,
   $(iii)_a$ or $(i)$, $(ii)$, $(iii)_b$.
Assume that the evolution
   \eqref{Maxwelleqns} satisfies the weak local energy bounds
   \eqref{Mderivloss} and the zero resolvent bound from Definition
   \ref{MEresbound}.  Moreover, let $F(0)$ and $G$ be supported inside
   the cone $C = \{ t \geq r - R_1\}$ for some $R_1>0$, and let $F$
   solve \eqref{Maxwelleqns}.  Then the following peeling estimates
   hold in normalized coordinates for large enough $m$:
   \begin{equation}
     \begin{split}
       | F_{uA}| \lesssim \kappa \frac{1}{\la t\ra \la t-r\ra^{3}} \\
       | F_{uv}| \lesssim \kappa \frac{1}{\la t\ra^{2} \la t-r\ra^{2}} \\
       | F_{AB}| \lesssim \kappa \frac{1}{\la t\ra^{2} \la t-r\ra^{2}} \\
       | F_{vA}| \lesssim \kappa \frac{1}{\la t\ra^3 \la t-r\ra}
     \end{split}
     \label{pointwiseest1}
   \end{equation}
   where
   \[
   \kappa = E^{m}(0) + \sum_{i=1}^2 \Bigl(\| t^{\frac72} \la r\ra^{-1} G_i^{\leq m}\|_{LE^*} + \| t^{\frac72} \la r\ra \overline {G_i}^{\leq m}\|_{LE^*}\Bigr).
   \]
Similar bounds will also hold for $F^{\Lambda}$ for $|\Lambda| \ll m$.
 \end{theorem}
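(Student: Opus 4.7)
The overall strategy mirrors the one for the scalar wave equation in \cite{MTT}, but adapted to the first order hyperbolic system structure and to the presence of the zero modes carrying the electromagnetic charges. The argument proceeds in roughly four stages: (i) upgrade the hypotheses to incorporate the full family of vector fields $Z = \{\partial, \Omega, S\}$; (ii) convert local energy decay into $L^2$ decay on dyadic space-time slabs; (iii) pass from $L^2$ bounds to pointwise bounds via Klainerman-Sobolev type inequalities; (iv) extract the individual peeling rates by exploiting the null-frame structure of \eqref{Maxwelleqns}.

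The first step is to promote \eqref{Mderivloss} and \eqref{zero-res} from the translation vector fields to the full family $Z$. Since $\L_Z$ commutes with the exterior derivative $d$ and (up to controlled lower order errors coming from the non-stationary, non-spherically-symmetric parts $g_{sr}, g_{lr}$) with the Hodge $*$ on our asymptotically flat background, the tensor $F^\Lambda$ again solves a Maxwell system of the form \eqref{Maxwelleqns} with source terms that can be bounded in $LE_{\Max}^{*,\leq |\Lambda|-1}$ by the original data. Here the long range part of the metric must be handled carefully so as to preserve the radial versus non-radial decomposition underlying the $LE_{\Max}$ norms; working in normalized coordinates with \eqref{normlr} is what makes this succeed. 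Combining with the hypothesized bounds then gives local energy decay and a zero-resolvent type bound for $F^\Lambda$, with a finite loss of derivatives depending on $|\Lambda|$.

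The second stage is the core of the argument. For a fixed dyadic time scale $t \approx T$, one wants to show that $\|F\|_{LE_{\Max}[T, 2T]} \lesssim \kappa T^{-3/2}$ (and analogous bounds with $S$ and $\Omega$ derivatives). This is obtained, following the template of \cite{MTT}, by combining the local energy bound \eqref{Mderivloss} with the commutator $S F$: the scaling vector field trades a power of $t$ for a power in the norm, and iterating $m$ times generates the weight $t^{7/2}$ in the definition of $\kappa$. In the asymptotic regime $r \sim t$ one also uses an $r$-weighted energy argument (Morawetz / $r^p$ estimate style) on the exterior, together with the zero resolvent bound \eqref{zero-res} to handle the elliptic correction at each fixed time. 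The critical role of the zero resolvent bound is to fix the electric and magnetic charges at infinity to be zero, which is precisely the content of \eqref{inf-bc}; without this, the radial part $\bar F$ would not decay at all.

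The third stage passes from the $L^2$ slab bounds to pointwise estimates. On each dyadic region $\{t \approx T, r \approx R\}$, one uses Sobolev embedding in the commuted norms: spatial $\partial$ and angular $\Omega$ derivatives give $L^\infty$ control, while the scaling vector field $S$ combined with $\partial_t$ localizes in the $(t-r)$ variable. This yields, in the interior $r \ll t$, a bound of the form $|F| \lesssim \kappa t^{-4}$ for all null components (reflecting the $t^{-4}$ rate of decay on compact regions mentioned in the abstract). In the wave zone $r \sim t$, the same procedure produces a bound $|F| \lesssim \kappa t^{-2}\la t-r\ra^{-2}$ on every component. Finally, stage four refines this uniform bound into the four separate rates in \eqref{pointwiseest1} by exploiting the Maxwell equations in the null frame: schematically, $\partial_v F_{uA}$, $\partial_v F_{uv}$, $\partial_v F_{AB}$ are expressible in terms of components with already-established better decay plus angular derivatives, so integrating in $v$ outward from the cone (using the support assumption to start the integration) trades $\la r\ra$ factors for $\la t-r\ra$ factors in the direction dictated by the null structure. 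Running this for $F_{uA}$ gains one power of $\la t-r\ra$, for $F_{uv}$ and $F_{AB}$ no gain, and for $F_{vA}$ one loses a power of $\la t-r\ra$ but gains a power of $\la t\ra$, matching \eqref{pointwiseest1}.

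The main obstacle is the interplay between the zero modes and the commutator vector fields. Unlike the scalar case, applying $\L_S$ or $\L_\Omega$ to $F$ does not preserve the radial/non-radial splitting exactly on a non-spherically-symmetric background, so when we invoke the zero resolvent bound for $F^\Lambda$ we must be careful that the induced charge at infinity remains zero. This is where the support condition on the data and the forward cone support of $G_1,G_2$ enter decisively, and where the normalization \eqref{normlr} of the metric is used to keep the error terms short range.
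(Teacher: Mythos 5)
Your outline matches the paper's architecture only at the coarsest level (commuting with $Z$, dyadic Klainerman--Sobolev embeddings, then exploiting the null structure), but two of your four stages are not actually carried by the mechanisms you name. First, the conversion of local energy decay into $t$-decay is not obtained by ``iterating the scaling commutator'' or by an $r^p$-weighted Morawetz argument; the paper runs a bootstrap in which pointwise bounds on a dyadic slab $C_T$ feed back into local energy bounds and vice versa, and the two sources of gain are (a) in the interior $r \ll t$, a fixed-time elliptic estimate (Proposition \ref{smallr}): the evolution system is rewritten as the $d^0$ system \eqref{Maxwfixedt} with $S$ used as a proxy for $\partial_t$ so that the time derivative contributes an error of size $t^{-1}$, after which the zero resolvent bound is applied --- crucially in the reweighted form \eqref{zero-res1}, whose equivalence with the assumed \eqref{zero-res0} occupies all of Section 5 and which you do not address; and (b) near the cone, a one-dimensional reduction using the explicit fundamental solution of $\Box$ applied to the second-order form \eqref{boxua} of the system. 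Without one of these (or a worked-out substitute) there is no route from the $LE$ norms to the intermediate bound \eqref{wkdecay}, and the weight $t^{\frac72}$ in $\kappa$ is the bookkeeping of this iteration, not of $m$-fold scaling commutations. You also need Lemma \ref{impradpart} (the radial part is only $O(r^{-2})$-coupled to the nonradial part) to run the localization in Proposition \ref{smallr} without losing the $\la r\ra$-weighted control of $\bar F$.

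Second, your peeling stage integrates in the wrong direction and misstates the gains. The paper shows that $\partial_v$ and $e_A$ derivatives of all components are automatically better (since $\partial_v = \frac1t S + \frac{t-r}{t}\partial_r$), then uses the Maxwell equations to show that $\partial_u\phi_0$ and $\partial_u F_{vA}$ are also better, and integrates these in $u$ along constant-$v$ slices starting from the support boundary $\{t = r - R_1\}$. Integrating $\partial_v F$ in $v$ at fixed $u$ has no vanishing starting point inside the support, and its lower-endpoint contribution would require the interior $t^{-4}$ bound, which in the paper is derived last, not first as you assert. Moreover $F_{uA}$ --- the one component that gains a power of $\la t-r\ra$ --- is not obtained by any transport integration: it comes from spherical Hodge theory, $\|\phi_-\|_{L^2(\S^2)} \lesssim \|\ang\phi_-\|_{L^2(\S^2)}$, combined with the Maxwell relations \eqref{Maxpart} expressing $\ang\phi_-$ through $\partial_u\phi_0$. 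Finally, $F_{uv}$ and $F_{AB}$ do gain a full power of $\la t\ra$ over the scalar rate (contrary to your ``no gain''); this requires the wave equation \eqref{midcomp} satisfied by $r\phi_0$ together with the one-dimensional reduction Lemma \ref{l:maindr}, applied twice to remove logarithms. As written, your stage (iv) would not produce \eqref{pointwiseest1}.
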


It is useful at this point to review  the situations where we already 
know that the hypothesis of the  theorem is verified. So far, this is only 
the case for spherically symmetric black hole space-times, 
where we can use  the result of  \cite{StTat}; this is further discussed 
at the end of this section. Another interesting case where we are 
almost there is that of Kerr metrics with small angular momentum. 
There we have available the result of \cite{AB2}. Unfortunately this result 
only applies for solutions to the homogeneous Maxwell equation, 
so it cannot be applied directly. We do note that there are standard 
duality arguments which allow one to pass from the homogeneous to the 
ingomogeneous problem in local energy bounds. However, in the 
present situation such arguments would have to be adjusted to 
deal with charges.

 We further remark that in a compact spatial region we obtain the rate of
 decay of $t^{-4}$ for all components. This rate of decay is better
 than the rate of $t^{-\frac52}$ that was obtained, for Minkowski
 space times, in \cite{ChKl}. We also note
 that the $t^{-4}$ rate of decay for $F_{uv}$, $F_{AB}$ on
 Schwarzschild space-times was previously obtained in \cite{DSS, DSS2} by
 making heavy use of the stationarity and radial symmetry of the
 problem.
 
 On the other hand, we note that various components (layers) of $F$
 expressed in the null frame are decaying at different rates along
 outgoing null cone. This type of behavior is known as peeling
 estimates and has been first observed in the physics literature in
 \cite{Sac}, \cite{Pen}. For the Minkowski space-time, peeling
 estimates are known, see for example \cite{ChKl}, and similar results
 have been obtained for Schwarzschild space-times, see for instance
 \cite{IngNic} and \cite{MasNic}.  See also the related results
 \cite{Blue}, \cite{DSS, DSS2}, \cite{FS}, \cite{G} for decay
 estimates for Maxwell fields on Schwarzschild geometries.

Finally, we reemphasize the role played by the improved decay assumptions (precisely
by an additional factor of $r$) on the radial part of the source term $G$.  This guarantees 
that our solutions effectively behave as zero charge solutions, and the residual charge 
inside the cone plays only a perturbative role in the analysis. As part of our analysis,
we obtain a better decay rate for the radial part of the Maxwell field, namely 
\begin{equation}\label{pointwiseest2}
| \bar F| \lesssim \kappa \frac{1}{\la t\ra^2  \la r\ra \la t-r\ra^{2}}
\end{equation}

The rest of the paper is dedicated to the proof of Theorem
\ref{main}. In Section 4 we supplement the local energy estimates
\eqref{Mderivloss} and the zero resolvent bounds
\eqref{zero-res}, which are assumed to hold only for the translation
vector fields $\partial$, with similar estimates involving $\Omega$ and $S$.
Section 5 is dedicated to obtaining zero resolvent bounds with
different weights at infinity. Section 6 contains an improvement on
the bounds for the radial part of the tensor. Finally, Section 7 is
the main part of the proof and is divided into two parts. In the
first part we treat the Maxwell system as a system of wave equations
and mimic the proof of the main result in \cite{MTT} to get the rates
of decay \eqref{pointwiseestwave} for all components of the tensor
$F$.\footnote{This is where we use the estimates from Sections 5 and
  6.} In the second part, we use the Maxwell system to improve the
rates of decay and obtain the peeling estimates \eqref{pointwiseest1}.

\subsection{ The case of spherically symmetric metrics}

Here we provide a brief discussion of the spherically symmetric case, which 
is discussed in \cite{StTat}. The situation considered there is that of spherically 
symmetric black hole space times with the following two properties:

\begin{itemize} 
\item The event horizon is nondegenerate.

\item The trapped set (photon sphere) is unique, and strictly hyperbolic.
\end{itemize} 

For such space-times we have:

\begin{proposition} 
The hypothesis of Theorem~\ref{main} is satisfied for spherically symmetric black hole space
times as in \cite{StTat}.
\end{proposition}

\begin{proof}[Outline of proof]
The steps of the proof are as follows:
\medskip

1. Uncouple the radial and nonradial components $\bar F$ and $F - \bar F$.

\medskip

2. For the radial components the equations take essentially the form
\[
d (r^2 \bar F) = r^2 \bar J,
\]
see the equations (1.11) in \cite{StTat}. Thus all bounds for the 
radial components $\bar F$ are obtained by direct integration from infinity,
see also Remark 1.6 in \cite{StTat}. 

  \medskip

  3.  The local energy bound \eqref{Mderivloss} holds for the
  nonradial part with $k=0$; this is the main result of \cite{StTat},
  Theorem 1.3.  In effect the result in there is more akin to
  \eqref{Mderivno} with $k=0$, with a loss localized to the trapped
  set, i.e. the photon sphere.

\medskip

4. The local energy bound  \eqref{Mderivloss} holds for the nonradial part with $k \geq 1$.
This is by now a fairly standard argument, using  the red shift property on the horizon, and only elliptic analysis
away from it. This mirrors prior work of various authors for the scalar wave equation, see for instance
\cite{DaRoNotes}, \cite{TT}.  More precisely, one can add derivatives to the estimates as in the proof 
of Theorem 4.4 in \cite{TT}.

\medskip

5. The zero resolvent bound in Definition \ref{MEresbound} holds. This
follows by Plancherel's theorem from the $k=0$ form of the local
energy decay bound \eqref{Mderivloss}, by an argument similar to the
one in \cite{Tat} for the scalar wave equation.
\end{proof}

\newsection{ Vector field estimates.}

As stated in \eqref{Mderivloss} and \eqref{zero-res}, both the local
energy decay and the zero resolvent bounds are assumed to hold for the
derivatives of $F$.  Our goal here is to extend these properties
to the full set of vector fields $Z$, i.e. including the rotations $\Omega$ 
and scaling $S$, applied  the Maxwell field $F$. The result is summarized
in the following  lemma:
 
 \begin{lemma}
   Assume that weak local energy decay and the zero resolvent bound,
   \eqref{Mderivloss} and \eqref{zero-res}, hold.  Then we also have
   \begin{equation}
     \sup_{t > t_0} E[F^{\leq m}](t) +   \| F^{\leq m}\|_{LE_{\Max}}
     \lesssim E^{1}[F^{\leq m}](t_0) + \sum_{i=1}^2 \|G_i^{\leq m+1}\|_{LE_{\Max}^{*}}.
     \label{lew:vf}
   \end{equation}

\begin{equation}
  \| F^{\leq m}(t_0) \|_{\LE_{\Max}} \lesssim \sum_{i=1}^2 \|G_i^{0,\leq m} (t_0)\|_{\LE_{\Max}^{*}}.
  \label{sle:vf}
\end{equation}
\label{l:levf}\end{lemma}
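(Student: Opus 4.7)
The plan is to induct on the total order $m$ of the vector fields, with the base case $m=0$ furnished by the hypotheses \eqref{Mderivloss} and \eqref{zero-res}. The inductive step reduces to showing, for each single vector field $X\in Z=\{T,\Omega,S\}$, that $\L_X F$ satisfies a Maxwell system whose right-hand side is controlled by quantities that the inductive hypothesis already bounds on $F^{\leq m-1}$. Since the exterior derivative is natural, it commutes with every Lie derivative, giving $d(\L_X F)=\L_X G_1$. For the dual equation, write $E_X := [\L_X, *]$, a zeroth order operator on 2-forms whose matrix coefficients are linear combinations of entries of $\L_X g$; this yields
\[
d(\L_X F) = \L_X G_1, \qquad d*(\L_X F) = \L_X G_2 - d(E_X F).
\]
The task is thus reduced to showing that $d(E_X F)$ is an acceptable source.

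By the asymptotic flatness hypothesis, together with the stationarity and spherical symmetry built into $g_{lr}$, one has $\L_T g_{lr} = \L_\Omega g_{lr} = 0$ and $\L_T g_{sr},\, \L_\Omega g_{sr}\in S^Z(r^{-2})$, while for the scaling field $\L_S g_{lr}\in S^Z_{rad}(r^{-1})$ and $\L_S g_{sr}\in S^Z(r^{-2})$. Consequently $E_X F$ is a sum of terms of the form $aF$ with $a\in S^Z(r^{-1})$, and whenever $a$ is only $O(r^{-1})$ (the $S$–long-range case) the coefficient $a$ is radial. The 3-form $d(E_X F)$ then splits schematically into $(\nabla a)F + a\,\nabla F$. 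The first piece gains an additional derivative on the metric coefficient and is $O(r^{-2})$ or better, so is absorbed trivially in the $LE_{\Max}^*$ norm by the inductive $LE_{\Max}$ control on $F^{\leq m-1}$. For the second piece, the $O(r^{-2})$ contributions from the short range part and from the $T,\Omega$ commutators are similarly harmless, while the radial $O(r^{-1})$ piece arising from $\L_S g_{lr}$ couples only with the radial component of $\nabla F$, whose $LE$ norm is supplied with the extra $\la r\ra$ factor built into the $LE_{\Max}$ norm via $\|\la r\ra \bar F\|_{LE}$. This is precisely the gain that the weighted spaces were designed to accommodate.

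Feeding this into \eqref{Mderivloss} applied to $\L_X F$ and absorbing the $E_X F$ contribution by the inductive bound on $F^{\leq m-1}$ gives \eqref{lew:vf} after iterating over all $X\in Z$. The proof of \eqref{sle:vf} is structurally identical: one applies \eqref{zero-res} on the slice $t=t_0$ to $\L_X F$ with source $\L_X G_i^0 - d(E_X F)$ and runs the same commutator analysis, now pointwise in time, verifying the decay condition \eqref{inf-bc} for $\L_X F$ from that of $F$ together with the $\L_X g$ bounds. The main technical obstacle is exactly the $O(r^{-1})$ long range error produced by commuting the scaling field past $g_{lr}$; it would be too weak to be absorbed by an unweighted $LE$ estimate, and it is only the $\la r\ra$-weighted treatment of the radial mode of the Maxwell tensor in $LE_{\Max}$, coupled with the radiality of the offending coefficient, that makes the bookkeeping close. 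This is also the structural reason no explicit subtraction of the electric and magnetic charges is needed at this stage of the argument.
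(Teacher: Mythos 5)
Your overall strategy --- commuting Lie derivatives through the Maxwell system, isolating the commutator $[\L_X,*]$, and inducting on the number of vector fields --- is the same as the paper's, but two essential steps are missing, and without them the argument does not close. First, the claim that the $O(r^{-2})F$ contributions to $H=d([*,\L_X]F)$ are ``absorbed trivially'' in $LE_{\Max}^*$ is false for the radial mode: the dual norm requires control of $\|\la r\ra\,\overline{H}\|_{LE^*}$, and with $\overline{H}\in S^Z(r^{-2})(F)$ this is $\sum_R\|\la r\ra^{-1/2}F\|_{L^2(A_R)}$, a dyadic \emph{sum} that is not controlled by the dyadic \emph{sup} defining $\|F\|_{LE}$; the failure is logarithmic but real, and the paper flags it explicitly. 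The fix there is to subtract a correction tensor $\tilde F$ defined by $*\tilde F=(\overline{[*,\L_X]F})_{\phi\theta}\,d\omega^2$, so that $H-d*\tilde F$ has no radial mode, and then to estimate $\L_XF-\tilde F$ rather than $\L_XF$. Some such device is needed in your argument as well.

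Second, for the scaling field your assertion that the $O(r^{-1})$ long-range piece ``couples only with the radial component of $\nabla F$'' is incorrect, and the weighting argument you invoke would not rescue it anyway: the extra $\la r\ra$ of control on $\bar F$ in $LE_{\Max}$ is matched by an extra $\la r\ra$ \emph{demanded} of the radial source in $LE^*_{\Max}$, so nothing is gained, and the sum-versus-sup issue above persists. The explicit computation in the paper shows that $H_{trA}$ contains $S(\sqrt{-\tilde g}\,\tilde g^{BB})(\tilde g^{tt}\partial_tF_{tB}+\tilde g^{rr}\partial_rF_{rB})$, an $O(r^{-1})$ coefficient multiplying \emph{nonradial} first derivatives, while $H_{tAB},H_{rAB}$ contain $S^Z(r^{-1})\partial_{t,r}F_{tr}$. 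These are disposed of only by substituting the Maxwell equations themselves: the divergence equation converts $\tilde g^{tt}\partial_tF_{tB}+\tilde g^{rr}\partial_rF_{rB}$ and $\partial_{t,r}F_{tr}$ into $S^Z(1)(G_2)+S^Z(r^{-1})(F,\L_{\{\partial,\Omega\}}F)$, yielding \eqref{Scommut}, after which one is back to the $O(r^{-2})$ situation handled by the correction $\tilde F$. Your proof needs both ingredients (or substitutes for them) to be complete; the remainder of the outline, including the treatment of \eqref{sle:vf} via the same commutators together with $[\L_X,d-d^0]$, is consistent with the paper.
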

\begin{proof}

  We begin with \eqref{lew:vf}. Note that for any vector field $X$
  and $F$ satisfying \eqref{Maxwelleqns}, we have
  \[
  d(\L_X F) = \L_X G_1, \qquad d*(\L_X F) = \L_X G_2 + H,
  \]
  where
  \begin{equation}\label{Hdef}
    H = d([*, \L_X] F).
  \end{equation}

  We now need to commute the Lie derivative with the Hodge star. By
  using the well-known formulas
  \begin{equation}\label{Lieform}
  (\L_X F)_{\alpha\beta} = X^{\gamma}\partial_{\gamma}F_{\alpha\beta}
  + F_{\gamma\beta}\partial_{\alpha}X^{\gamma} +
  F_{\alpha\gamma}\partial_{\beta}X^{\gamma},
  \end{equation}
  \begin{equation}\label{Hodgeform}
  (*F)_{\alpha\beta} =\frac{1}{2}
  \epsilon_{\alpha\beta\gamma\delta}\sqrt{-g}g^{\gamma\mu}g^{\delta\nu}F_{\mu\nu},
  \end{equation}
  we easily obtain
  \begin{equation}\label{commut}
    \begin{split}
      ([*, \L_X]F)_{\alpha\beta} = & - \frac{1}{2}
      X(\epsilon_{\gamma\delta\alpha\beta}\sqrt{-g}g^{\gamma\mu}g^{\delta\nu})F_{\mu\nu}
      + \frac{1}{2}
      \epsilon_{\gamma\delta\alpha\beta}\sqrt{-g}g^{\gamma\mu}g^{\delta\nu}
      (F_{\rho\nu} \partial_{\mu}X^{\rho} +
      F_{\mu\rho} \partial_{\nu}X^{\rho}) \\ & -\frac{1}{2}
      \sqrt{-g}g^{\gamma\mu}g^{\delta\nu}
      F_{\mu\nu}(\epsilon_{\gamma\delta\rho\beta} \partial_{\alpha}X^{\rho}
      + \epsilon_{\gamma\delta\alpha\rho} \partial_{\beta}X^{\rho}).
    \end{split}
  \end{equation}

  If $X\in\Omega$ we obtain that
  \begin{equation}\label{omega*commut}
    [*, \L_X]F \in S^Z(r^{-2})(F),
  \end{equation} 
  and thus also
  \begin{equation}\label{omegacommut}
    % \overline {\L_X F} = 0, \qquad
    H \in S^Z(r^{-3})(F) + S^Z(r^{-2})(\L_{\partial} F).%, \qquad \overline {H} \in S^Z(r^{-2})(\overline {F}, \overline {\L_T F}), 
  \end{equation}  
Here \eqref{omega*commut} follows from \eqref{commut}, the
 fact that the commutator vanishes for spherically symmetric metrics
 (since $\Omega$ would then be a Killing vector field) and the
 condition (ii) on the metric $g$.  We note that \eqref{omegacommut}
 uses the hypothesis \eqref{srderiv} but that is not strictly required
 for the proof of this lemma as it suffices to have $r^{-2}$-type decay on the first term.
  
  Unfortunately this is not quite enough to close the
  argument. Indeed, we would like to prove that
  \begin{equation}\label{omegale}
   \sup_{t > t_0} E[\L_X F](t) + \|\L_X F\|_{LE_{\Max}} \lesssim E^{1}[F^{\leq 3}](t_0) + \sum_{i=1}^2 \|G_i^{\leq 4}\|_{LE_{\Max}^{*}}.
  \end{equation}

 A first computation, using \eqref{Mderivloss},  gives
  \[
   \sup_{t > t_0} E[\L_X F](t) + \|\L_X F\|_{LE_{\Max}} \lesssim E^{1}[F^{\leq 3}](t_0) + \sum_{i=1}^2 \|\L_X G_i^{\leq 1}\|_{LE_{\Max}^{*}} + \|H\|_{LE_{\Max}^{*,1}} 
\]
while \eqref{omegacommut} combined with \eqref{Mderivloss} yields
\[ 
\|H\|_{LE_{\Max}^{*,1}} 
\lesssim E^{1}[F^{\leq 3}](t_0) + \sum_{i=1}^2 \|G_i^{\leq 4}\|_{LE_{\Max}^{*}} + 
\|\la r\ra\overline{H}\|_{LE^{*,1}}. 
  \]
  
  We would like to combine the last two bounds.  This almost works,
  except for the radial part $\overline{H}$; indeed, a priori one can
  only estimate
  \[
  \|\la r\ra\overline{H}\|_{LE^{*,1}} \lesssim \|\la r\ra^{-1} F\|_{LE^{*,2}} 
\]
  The  term on the right is not controlled by \eqref{Mderivloss} (though the failure is
  only logarithmic).  To avoid this issue, we remove this bad term by
  introducing a correction $\tilde F$ of $\L_X F$ as follows:
  \begin{equation}\label{tildeH}
    *\tilde F = (\overline{[*, \L_X] F})_{\phi\theta}d\omega^2
  \end{equation}

 Clearly by \eqref{omega*commut}
\begin{equation}\label{tildeFest}
 \tilde F \in S^Z(r^{-2}) F.
\end{equation}

 Thus 
 \[
\sup_{t > t_0} E[\tilde F](t) + \|\tilde F\|_{LE_{\Max}} \lesssim \sup_{t > t_0} E[F](t) + \|F\|_{LE_{\Max}} 
\]
with room to spare, so it is enough to prove the bound 
\[
\sup_{t > t_0} E[\L_X F - \tilde F](t) + \|\L_X F - \tilde F\|_{LE_{\Max}} \lesssim E^{1}[F^{\leq 3}](t_0) + \sum_{i=1}^2 \|G_i^{\leq 4}\|_{LE_{\Max}^{*}}
\] 
  We have
  \[
  d*(\L_X F - \tilde F) = \L_X G_2 + H - d*\tilde F,
  \]
 Since $d$ annihilates $(\overline{[*, \L_X] F})_{tr}dt\wedge dr$ and due to our choice of $\tilde F$, the difference $H - d*\tilde F$ has no radial mode
and can be estimated by
  \[
  \|H - d*\tilde F\|_{LE_{\Max}^{*,1}} \lesssim \|F\|_{LE^2} 
  \lesssim E^3(t_0) + \sum_{i=1}^2 \|G_i\|_{LE_{\Max}^{*,3}}
  \]
  On the other hand, we have 
\[
d(\L_X F - \tilde F) =  \L_X G_1 - d \tilde F
\]
so we need to bound the last term in $LE_{\Max}^{*,1}$. Again, one may be concerned
with the radial part. However, it is easy to see, using the
  asymptotic flatness of the metric, that
  \[
  \tilde F - (\overline{\tilde F})_{tr} dt\wedge dr  \in S^Z(r^{-1}) \tilde F 
  \]
  Hence we obtain the favorable expression
  \[
d \tilde F  \in S^Z(r^{-1}) \L_{\partial} \tilde F + S^Z(r^{-2}) \tilde F 
 \]
which, taking \eqref{tildeFest} into account, suffices in order to estimate $\|d \tilde F\|_{LE_{\Max}^{*,1}}$ by $\|F\|_{LE^2}$. This completes the proof of \eqref{omegale}.

Next we turn our attention to the scaling vector field $S$.
With $H$ as in \eqref{Hdef}, it is enough to prove that
  \begin{equation}\label{Scommut}
    H \in S^Z(r^{-2})(F, \L_{\{\partial,\Omega\}} F) + S^Z(r^{-1})G_2%, \qquad \overline {H} \in S^Z(r^{-2})(\overline {F}, \overline {\L_T F}). 
  \end{equation}
  
  The same arguments as above will then yield the analogue of \eqref{omegale}, namely
  \begin{equation}\label{Scale}
   \sup_{t > t_0} E[\L_S F](t) + \|\L_S F\|_{LE_{\Max}} \lesssim E^{1}[F^{\leq 9}](t_0) + \sum_{i=1}^2 \|G_i^{\leq 10}\|_{LE_{\Max}^{*}}.
  \end{equation}
  
  We immediately get that $H \in S^Z(r^{-1})(F^{\leq 1})$ by
  \eqref{commut} and the fact that $S$ is a conformal Killing vector
  field for the Minkowski metric. We also note that since $g_{sr}\in
  S^Z(r^{-2})$, it is enough to prove \eqref{Scommut} for the
  spherically symmetric part $\tilde g = m + g_{lr}$, which in
  normalized coordinates can be written (see \eqref{normlr}):
  \begin{equation}\label{tildeg}
  \tilde g = -dt^2 + dr^2 + r^2 (1+ g_{\omega}(r))d \omega^2, \qquad
  g_\omega \in S^Z_{rad}(r^{-1}).
  \end{equation}
  Note in particular that $\tilde g$ is diagonal in the $(t, r, \phi, \theta)$ coordinates.
 
  A careful inspection of \eqref{commut} reveals that
  \begin{equation}\label{S*commut}
 \tH_{\alpha\beta} :=  ([*_{\tilde g}, \L_S]F)_{\alpha\beta} = \epsilon_{\gamma\delta\alpha\beta}(-S(\sqrt{-\tilde g}\ \tilde g^{\gamma\gamma}\tilde g^{\delta\delta} )+ \kappa\sqrt{-\tilde g}\ \tilde g^{\gamma\gamma} \tilde g^{\delta\delta}) F_{\gamma\delta}
  \end{equation}
  where $(\alpha,\beta, \gamma, \delta)$ is some permutation of $(t,
  r, \phi,\theta)$ and
  \[
  \kappa = \left\{ \begin{array}{ccc} -2 & (\alpha,\beta)=(t,r) , \cr
      2 & (\alpha,\beta)=(\phi,\theta), \cr 0 & \qquad \text{otherwise}.
    \end{array}
  \right.
  \]
 We remark that, due to \eqref{tildeg}, we have
 \[
 \tH_{t\theta}= \tH_{t\phi} = \tH_{r\theta} = \tH_{r\phi} = 0.
 \]

  We now take the exterior derivative of the tensor $\tH$, and subsequently pass to the $(t, r, A, B)$ frame. Every time
  a derivative falls on the metric coefficients, we gain a factor of
  $r^{-2}$. Since $e_{A,B} = \frac1r \Omega$, we obtain
  \[
  H_{tAB} \approx r^{-2} \partial_{t} \tH_{\phi\theta} \in S^Z(r^{-1})\partial_{t} F_{tr} +
  S^Z(r^{-2})(F, \L_{\{\partial, \Omega\}}F)
  \]
  \[
  H_{rAB} \approx r^{-2} \partial_{r} \tH_{\phi\theta} \in S^Z(r^{-1})\partial_{r} F_{tr} +
  S^Z(r^{-2})(F, \L_{\{\partial, \Omega\}}F)
  \]
 Let us now notice that the second equation in \eqref{Maxwelleqns}
  implies that
  \[
  \partial_{t,r} F_{tr} \in S^Z(1)(G_2) + S^Z(r^{-1})(F, \L_{\{\partial,
    \Omega\} } F).
  \]
%  \[
%  \tilde g^{tt}\partial_{t}F_{tB} + \tilde g^{rr}\partial_{r}F_{rB}
%  \in S^Z(1)(G_2) + S^Z(r^{-1})(F, \L_{\{\partial, \Omega\} } F).
%  \]
On the other hand,
  \[
  H_{tr\phi} = \partial_{\phi} \tH_{tr} \in S^Z(r^{-3}) \partial_{\phi}F_{\phi\theta}  \]
  and similarly for $H_{tr\theta}$. This implies
 \[
 H_{trA}, H_{trB} \in S^Z(r^{-2})(F, \L_{\{\partial, \Omega\} } F).
 \] 
Thus \eqref{Scommut} for $X=S$ is now proved. 
  
   Since $\L_X \bar F = \overline {\L_X F}$ for $X\in\{\Omega, S\}$, \eqref{omegale} and \eqref{Scale} imply the local energy decay bound for $\L_{\Omega} F$ and for $\L_S F$. More derivatives can be readily added to our argument, and higher powers of $\Omega$ and $S$ are dealt with by induction.

  The proof of \eqref{sle:vf} is similar. Note that for any vector
  field $X$ and $F$ satisfying \eqref{Maxwfixedt}, we have
  \[
  d^0(\L_X F) = \L_X G^0_1 + [\L_X, d-d^0]F, \qquad d^0*(\L_X F) = \L_X
  G^0_2 + H + [\L_X, (d-d^0)*]F,
  \]
  with $H$ given by \eqref{Hdef}.

  One now easily checks, using \eqref{d0def} and the fact that $\L_{\partial_t} F =0$ as $F$ is frozen at time $t_0$, that $[\L_X, d-d^0]F=0$
  if $X\in\{\Omega, S\}$. Moreover,
  \[ [\L_X, (d-d^0)*]F = dt\wedge\L_{\partial_t} [*, \L_X] F
  \]
  where the Hodge star is frozen at time $t_0$.
  
  The proofs of the analogues of \eqref{omegale} and \eqref{Scale}, namely 
  \[
  \|\L_{\Omega} F(t_0)\|_{\LE_{\Max}} \lesssim \sum_{i=1}^2 \|G_i^{0,\leq 3} (t_0)\|_{\LE_{\Max}^*},
  \]
  \[
  \|\L_{S} F(t_0)\|_{\LE_{\Max}} \lesssim \sum_{i=1}^2 \|G_i^{0,\leq 9} (t_0)\|_{\LE_{\Max}^*},
  \]
  follow from using \eqref{zero-res} and the same methods as
  above.
  The lemma follows by induction.
\end{proof}

\newsection{ Elliptic zero resolvent bounds.}

 The zero resolvent bound from \eqref{sle:vf} can be viewed more as a qualitative 
statement about the absence of zero eigenvalues and resonances (except for the 
charge induced modes, which we asymptotically identify with the radial part of 
$F$). Because of this, one has a choice over the weights that are used at infinity,
very much like in the similar estimates for the inverse Laplacian. This idea is explored
in this section and will play a key role in obtaining the correct pointwise decay  estimates 
 in a bounded region.  Our main result is as follows:

\begin{lemma}
  The following fixed time estimates for solutions to \eqref{Maxwfixedt}, 
restricted to fields $F \in \LE_{\Max}$ with the additional
property \eqref{inf-bc},  are equivalent:
  \begin{equation}\label{zero-res0} \begin{split}
    \| F^{\leq m}(t_0) \|_{\LE} 
+  \|\la r\ra \overline{F}^{\leq m}(t_0)\|_{\LE}  \lesssim 
    \sum_{i=1}^2 \|G^{0,\leq m}_i (t_0)\|_{\LE^*} + \|\la r\ra \overline{G_i^0}^{\leq m} (t_0)\|_{\LE^*},
\end{split}  \end{equation} 
  \begin{equation}\label{zero-res1} \begin{split}
    \| \la r\ra^{-1} F^{\leq m}(t_0) \|_{\LE} 
+ \|\la r\ra \overline{F}^{\leq m}(t_0)\|_{\LE}   \lesssim 
    \sum_{i=1}^2  \|\la r\ra^{-1} G^{0,\leq m}_i  (t_0)\|_{\LE^*} + \|\la r\ra \overline{G_i^0}^{\leq m} (t_0)\|_{\LE^*}.
\end{split}  \end{equation}
  \label{impl:levf}\end{lemma}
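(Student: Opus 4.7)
The proof of Lemma \ref{impl:levf} establishes an equivalence between two weighted formulations of the zero resolvent bound, which differ only in the weights on the non-radial components of $F$ and $G^0$ (the radial/charge part carries the same $\la r\ra$ weight on both sides of each estimate). Under the charge-free condition \eqref{inf-bc}, the non-radial part of $F$ is governed by an essentially elliptic problem whose Green's function enjoys the improved decay $r^{-3}$, as opposed to the $r^{-2}$ rate characteristic of monopole fields. This improved decay is what ultimately makes the two weighted norms interchangeable.

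The strategy for both implications is a dyadic localization argument in the source. For the direction \eqref{zero-res0} $\Rightarrow$ \eqref{zero-res1}, I would introduce a smooth dyadic partition of unity $\{\chi_k\}$ on $\R^3$ with $\supp \chi_k \subset A_{2^k}$ and decompose $G^0_i = \sum_k \chi_k G^0_i$. Since $d G^0_i = 0$, a slight perturbation of the cutoffs (absorbing $d\chi_k \wedge G^0_i$ into neighboring pieces) produces exactly closed localized sources. By linearity together with the uniqueness built into \eqref{zero-res0} under \eqref{inf-bc}, we have $F = \sum_k F_k$ where $F_k$ solves \eqref{Maxwfixedt} with source $\chi_k G^0_i$. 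Applying \eqref{zero-res0} to each $F_k$ yields
\[
\|F_k\|_{\LE} + \|\la r\ra \overline{F_k}\|_{\LE} \lesssim \|\chi_k G^0_i\|_{\LE^*} + \|\la r\ra \overline{\chi_k G^0_i}\|_{\LE^*}.
\]
The key point is that the localization at scale $2^k$ produces the scaling $\|\chi_k G^0_i\|_{\LE^*} \approx 2^k \|\la r\ra^{-1} \chi_k G^0_i\|_{\LE^*}$; I would then exploit the off-support decay of $F_k$ (namely $r^{-3}$ for $r \gg 2^k$ and the corresponding elliptic interior control for $r \ll 2^k$) to absorb the factor $2^k$ and sum up to obtain \eqref{zero-res1}.

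The reverse direction \eqref{zero-res1} $\Rightarrow$ \eqref{zero-res0} follows by a parallel dyadic argument with the weights reversed, and the higher-order bounds with vector fields ($|\Lambda| \leq m$) are obtained by applying the same reasoning to $F^\Lambda$, combined with Lemma \ref{l:levf}.

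The main obstacle is the quantitative control of the dyadic pieces $F_k$ outside their source region. For $r \gg 2^k$ the required $r^{-3}$ decay rate is tied to the absence of a monopole contribution, which is exactly what the charge-free condition \eqref{inf-bc} enforces at the qualitative level. Converting this qualitative condition into the quantitative far-field decay needed to sum the dyadic bounds is the delicate step of the argument, and is the place where the specific structure of the Maxwell system (rather than a generic elliptic system) must be used.
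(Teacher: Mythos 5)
Your proposal has the right general instincts (the issue lives at infinity, the radial and nonradial parts must be treated separately, and dyadic localization of the source is a natural tool --- indeed the paper uses exactly that localization in its Minkowski model case), but there are several genuine gaps. The most serious is the decomposition $F=\sum_k F_k$: the hypothesis \eqref{zero-res0} is only an a-priori bound, not a solvability statement (the paper stresses this explicitly), so you are not entitled to produce, for each localized source $\chi_k G^0_i$, a solution $F_k$ of the curved-space system \eqref{Maxwfixedt} lying in $\LE_{\Max}$ and satisfying \eqref{inf-bc}. Without such an existence theory your sum over $k$ never gets off the ground. The paper circumvents this by building an explicit parametrix $\tF$ only near infinity, where the metric is a perturbation of Minkowski and one can solve by hand (radial ODEs integrated from infinity, plus the Euclidean fundamental solution for the nonradial part), and then applies the assumed a-priori bound a single time to the remainder $F-\chi\tF$.

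Second, you misattribute the role of \eqref{inf-bc}: that condition constrains the \emph{radial} part $\bar F$, and its function is to fix the constant of integration (the charge) when one integrates the radial ODEs $\partial_r(r^2\overline{F_{AB}})=r^2\overline{G^0_1}_{rAB}$ from infinity --- not to upgrade the Green's function decay of the nonradial part, which has no monopole to begin with. Third, the step you yourself identify as ``delicate'' --- converting the charge-free condition into quantitative far-field decay summable over dyadic scales --- is precisely where the long-range $S^Z(r^{-1})$ metric perturbation bites: at the weight level of \eqref{zero-res1} the radial and nonradial modes become strongly coupled, a naive perturbation off the Laplacian loses a logarithm, and the paper must solve the radial part exactly, treat only the nonradial part perturbatively, and iterate the parametrix construction once to kill the logarithmic divergence. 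Finally, the reverse implication is not a mirror image of the forward one: the paper gets the compact region for free from the chain $LHS\eqref{zero-res1}\lesssim RHS\eqref{zero-res1}\lesssim RHS\eqref{zero-res0}$ together with exterior elliptic gradient bounds, localizes $F$ to $\{r\gtrsim R_2\}$, and absorbs the metric error using the smallness factor $R_2^{-1/2}$; none of this structure appears in your sketch.
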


We remark that \eqref{zero-res0} is the same as \eqref{sle:vf}, so it holds true under the assumption \eqref{zero-res} due to Lemma~\ref{l:levf}. 

We note that the above estimates are required to hold whenever $F$ has the regularity 
stated in the beginning, and the right hand side is finite. The a priori regularity of 
$F$ is needed in order to preclude the existence of solutions to the homogeneous 
$d^0$ system. We will only use these for compactly supported $F$, but for the proof 
it is more convenient to work with a weaker a priori decay assumption \eqref{inf-bc}.
 We also remark that this is not a solvability property, it is just an a priori bound.

\begin{proof}

  We will first prove the lemma for $m=0$. In order to simplify the
  notation, since all the analysis takes place on a $\{t=t_0\}$ slice,
  we will drop $t_0$ for the rest of the proof.  The weights in the
  two estimates are comparable in a compact set. Thus, the proof is
  primarily concerned with the analysis at infinity. But at infinity
  our problem is reasonably well approximated by the Minkowski
  problem, so for the most part it suffices to do a perturbative
  analysis. We begin with a brief analysis of what happens in the
  Minkowski space-time.

{\bf The Minkowski case.} 
Denoting by $\bigstar$  the Hodge star of the Minkowski metric,
the Minkowski equation has the form
 \begin{equation}\label{Minkpart}
    d^0 F =  G^0_1, \qquad d^0 \bigstar F =  G^0_2 
  \end{equation} 
where $d^0$ is now the standard exterior differentiation on a fixed time slice.
We will prove both \eqref{zero-res0} and \eqref{zero-res1} in the Minkowski case
by separating the radial and nonradial parts. We remark that our proof
also gives the recipe for constructing the unique solution $F$ which satisfies
 \eqref{inf-bc}.

For the radial parts we have
\[
  \partial_r (r^2 \overline{F_{AB}}) = r^2
  \overline{(G^0_1)}_{rAB}, \qquad \partial_r (r^2 \overline{
    F^\bigstar_{AB}}) = r^2 \overline{(G^0_2)}_{rAB}
 \]
where $F^\bigstar = \bigstar F$.
 The decay condition \eqref{inf-bc} at infinity allows us integrate
 these equations from infinity to uniquely determine the components
 $\overline{F_{AB}}$ and $\overline{ F^\bigstar_{AB}}$.  Outside a
 ball these will satisfy the straightforward bound
\begin{equation}\label{Mtf-rad}
\| r(\overline{F_{AB}},\overline{ F^\bigstar_{AB}})\|_{\LE} 
+ \| r\nabla (\overline{F_{AB}},\overline{ F^\bigstar_{AB}})\|_{\LE} 
\lesssim  \| r(\overline{G^0_1},\overline{G^0_2})\|_{\LE^*}. 
\end{equation}
We remark that in the Minkowski case the boundary condition at infinity will 
in general force an $r^{-2}$ blow-up at zero for the radial part. In our case 
this does not happen because of our a-priori assumption $F \in \LE_{\Max}$.

For the nonradial part we argue in a more standard manner. For any tensor $A$, let $A_{nr}= A-\bar A$. We can rewrite \eqref{Minkpart} as  
\begin{equation}\label{Minkwave}
    \Delta_x F_{nr, \alpha\beta} =   ( \bigstar d^0 \bigstar (  G^0_{1,nr}) +
 d^0 \bigstar ( G^0_{2,nr}))_{\alpha\beta} 
  \end{equation}
  where $\Delta_x$ is the usual Euclidean Laplacian. This is solved
in the standard manner, using the fundamental solution for the 
Laplacian. Then the estimate
  \begin{equation}\label{Minkzero-res1/2}
    \| r^{-1} F_{nr}\|_{L^2} + \| \nabla_x F_{nr}\|_{L^2} \lesssim \sum_{i=1}^2 \| G^{0}_{i,nr} \|_{L^2}
  \end{equation}
  is a direct consequence of the direct elliptic estimate for
  $\nabla_x F$, coupled with Hardy's inequality to get the bound for
  $F$.

To prove either \eqref{zero-res0} or \eqref{zero-res1} it
  suffices to start with $G^0_i$ supported in a fixed dyadic region
  $A_R$.  
  
  Let us start with \eqref{zero-res1}.  Clearly \eqref{Minkzero-res1/2} already suffices when $|x|>\frac{R}8$. When $|x|<\frac{R}8$, on the other hand, $F_{nr}$ is harmonic, so we
  trivially obtain the pointwise bound
\[
|F_{nr}(x)| + R |\nabla_x F_{nr}(x)|  \lesssim R^{-\frac12}  
(\| r^{-1} F_{nr}\|_{L^2} + \| \nabla_x F_{nr}\|_{L^2}) 
\]
which proves \eqref{zero-res1} for $|x|<\frac{R}8$.  

 For \eqref{zero-res0}, note first that \eqref{Minkzero-res1/2} suffices when $|x|<8R$. For $|x|>8R$ we note that we can write \eqref{Minkwave} as
 \[
 \Delta_x F_{nr, \alpha\beta}(x) = \sum_{i=1,2} \sum c_{ij\alpha\beta} \partial_j (G^0_{i, nr})_{\alpha\beta}
 \]
for some constants $c_{ij\alpha\beta}$. By using the fundamental solution of the Laplacian and the support property of $G^0_i$ we obtain the pointwise bound
\[
|F_{nr}(x)| + |x| |\nabla F_{nr}(x)| \lesssim \frac{R}{|x|^2} \sum_{i=1}^2 \| G^{0}_{i,nr} \|_{\LE^*},
\]
which immediately implies \eqref{zero-res0}.

We further observe that in the Minkowski case we have actually
proved a strengthened form of \eqref{zero-res0} and \eqref{zero-res1},
which includes gradient bounds on the left:
 \begin{equation}\label{Mzero-res0} \begin{split}
    \| F\|_{\LE} + \|\la r\ra \nabla_x F \|_{\LE} 
+  \|\la r\ra \bar F\|_{\LE}   + \|\la r\ra^2 \nabla_x \bar F\|_{\LE} \lesssim 
     \|G^{0}\|_{\LE^*} + \|\la r\ra \overline{G^{0}} \|_{\LE^*},
\end{split}  \end{equation} 
  \begin{equation}\label{Mzero-res1} \begin{split}
    \|\la r\ra^{-1} F\|_{\LE} + \| \nabla_x F \|_{\LE} 
+  \|\la r\ra \bar F\|_{\LE}   + \|\la r\ra^2 \nabla_x \bar F\|_{\LE} \lesssim 
     \|\la r\ra^{-1} G^{0}\|_{\LE^*} + \|\la r\ra \overline{G^{0}} \|_{\LE^*}.
   \end{split} \end{equation} 
By standard elliptic estimates, similar
 gradient terms can be added on the left in \eqref{zero-res0} and
 \eqref{zero-res1} in the nontrapping case. However, in the black hole
 case this can be done only outside a ball, more precisely in the
 region where $\partial_t$ is time-like.

\bigskip

{\bf The general case as a perturbation of Minkowski.}
Starting with the equation  \eqref{Maxwfixedt}, we write it as a perturbation 
of the Minkowski problem \eqref{Minkpart} as follows:
\begin{equation}\label{Mink-pert}
 d^0 F =  G^0_1 , \qquad d^0 \bigstar F 
=  G^0_2 +  d^0 (\bigstar - *)F.
\end{equation}
In order to work with this, we need to understand the size of the terms in the last 
expression. Our asymptotic flatness assumptions provide the following expansion:
\begin{equation}\label{errinfty}
d^0  (\bigstar - *)  F \in S^Z(r^{-1}) \nabla_x   F+ S^Z(r^{-2})   F,
\end{equation}
while for the radial part,
\begin{equation}\label{raderrinfty}
\overline{d^0  (\bigstar - *)  F} \in S^Z(r^{-1}) \nabla_x  \overline {  F}
+ S^Z(r^{-2}) (\nabla_x  F + \overline { F})
+ S^Z(r^{-3})    F.
\end{equation}
Next we use the Minkowski analysis above to deal with the general case.
We need two slightly
  different arguments in order to go up and down in terms of decay
  rates.
 
\bigskip

\def\tF{{\tilde F}}
{\bf The proof of $\eqref{zero-res0} \implies \eqref{zero-res1}$}.
 The main idea is to peel off the far part of the solution to
\eqref{Maxwfixedt} using a simple parametrix. Precisely, 
it suffices to construct an approximate solution $\tF$ near infinity
 which satisfies the bound \eqref{zero-res1},
as well as the error estimate
\begin{equation}\label{tF}
\|d^0 \tF - G^0_1\|_{\LE_{\Max}^*} + \|d^0 * \tF - G^0_2\|_{\LE_{\Max}^*}
\lesssim RHS\eqref{zero-res1}.
\end{equation}
Then the desired bound \eqref{zero-res1} for $F$ follows by applying 
\eqref{zero-res0} to $F-\chi \tilde F$, where $\chi$ is a smooth radial 
cutoff function which selects the exterior of a large ball.

The simplest idea to construct an approximate solution for
\eqref{Maxwfixedt} near infinity would be to treat the far away part of the
equation \eqref{Maxwfixedt} as a perturbation of the Laplacian. This
would work in order to prove any intermediate bound between
\eqref{zero-res0} and \eqref{zero-res1}, but not \eqref{zero-res1};
this is because at the level of \eqref{zero-res1} the radial and
nonradial modes become strongly coupled. To remedy this, we solve
directly for the radial parts, and perturbatively only for the
nonradial components.

Precisely, the radial part of the equations \eqref{Maxwfixedt} yields
the equations
\[
  \partial_r (r^2 \overline{F}_{AB}) = r^2
  \overline{G^0_1}_{rAB}, \qquad \partial_r (r^2 \overline{
    F^*_{AB}}) = r^2 \overline{G^0_2}_{rAB}
 \]
where $F^*=*F$.
We integrate these equations from infinity to uniquely determine
the components $\overline{F_{AB}}$ and $\overline{ F^*_{AB}}$.
Outside a ball these will satisfy the straightforward bound
\begin{equation}\label{tf-rad}
\| r(\overline{F_{AB}},\overline{ F^*_{AB}})\|_{\LE} 
+ \| r\nabla (\overline{F_{AB}},\overline{ F^*_{AB}})\|_{\LE^*} 
\lesssim  \| r(\overline{G^0_1},\overline{G^0_2})\|_{\LE^*} 
\end{equation}
which is akin to the Minkowski bound \eqref{Mtf-rad}.

To define $\tF$, we first obtain its nonradial part $\tF_{nr}$ by
solving the  Minkowski space-time version \eqref{Minkpart} of our equations.  
As discussed above, this satisfies the bounds
\begin{equation}\label{tf-nr}
\| r^{-1} \tF_{nr}\|_{\LE} + \|\nabla \tF_{nr}\|_{\LE}
\lesssim    \sum_{i=1}^2  \|\la r\ra^{-1} G^0_{i,nr} \|_{\LE^*}.
 \end{equation}
 
Now we define the radial part of $\tF$ by requiring it to match the two 
radial components of $F$ directly computed above, namely
\[
\overline {\tF_{AB}} = \overline{F_{AB}}, \qquad  \overline {\tF^*_{AB}} = \overline {F^*_{AB}}.
\]
The first equation gives directly $\tF_{AB}$. From the second we compute
\begin{equation}\label{tf-def}
\overline{\tF_{tr}} \in S^Z_{rad}(1)  \overline {F^*_{AB}} + S_Z(r^{-2}) \tF_{nr}.
\end{equation}

Our construction above yields a field $\tF$ outside a large ball. By
\eqref{tf-rad}, \eqref{tf-nr} and \eqref{tf-def} it follows that $\tF$
satisfies the bound \eqref{zero-res1}.  Further, $\tF$ solves exactly
the first equation in \eqref{Maxwfixedt}, as well as the radial
component of the second equation in \eqref{Maxwfixedt}.  It remains to
estimate the nonradial error in the second equation.  Using the
asymptotic flatness of the metric, we see that this is given by
\[
\begin{split}
d^0 (\tF^*)_{nr} - G^0_{2,nr} = & \ ( d^0 \ast \tF_{nr}  +  d^0 \ast \bar{\tF} )_{nr} -  G^0_{2,nr} 
\\
 = & \  (d^0 (\ast-\bigstar) \tF_{nr})_{nr}  
\\
\in & \  ( S^Z(r^{-1}) \nabla \tF_{nr} +   S^Z(r^{-2})  \tF_{nr})_{nr}.
\end{split}
\]

We can bound this error using \eqref{tf-nr} to obtain 
\[
\|r (d^0 (\tF^*)_{nr} - G^0_{2,nr}) \|_{\LE} \lesssim \sum_{i=1}^2 \|\la r\ra^{-1}
G_i^0\|_{\LE^*}.
\]
This almost gives \eqref{tF}, up to a logarithmic divergence. However, our error $\tilde G^0_2 := G^0_2-d*\tilde{F}$
decays better than $G_2^0$ by a power of $r$, so in order to obtain \eqref{tF} it suffices to reiterate
once more the above construction.

  {\bf The proof of $\eqref{zero-res1} \implies \eqref{zero-res0}$}. We begin
 with the series of inequalities
\[
LHS( \eqref{zero-res1}) \lesssim RHS( \eqref{zero-res1}) \lesssim  
RHS( \eqref{zero-res0}). 
\]
As observed earlier, we can also obtain an elliptic bound for $\nabla
F$ outside a compact set. These estimates provide a weaker bound,
which nevertheless suffices within a compact set.  Hence a
straightforward localization argument, namely replacing $F$ with $\chi
F$, allows us to reduce the problem to the case when $F$ is supported in an exterior region $\{r \gtrsim R_2\}$.

But in this region we can replace $g$ by $m$ perturbatively.  We write the equation
\eqref{Maxwfixedt} as in \eqref{Mink-pert}, and apply the bound \eqref{Mzero-res0}
in the Minkowski setting. It remains to estimate the error $\| d^0(*-\bigstar) F\|_{\LE^*_{\Max}}$,
for which we use the expressions \eqref{errinfty} and \eqref{raderrinfty}. We obtain
\[ \begin{split}
\| d^0(*-\bigstar) F\|_{\LE^*_{\Max}} & \lesssim \| \la r\ra^{-1} \nabla_x F\|_{\LE^*} + \| \nabla_x \bar F\|_{\LE^*} + \|\la r\ra^{-2} F\|_{\LE^*_{\Max}} \\
& \lesssim R_2^{-1/2} (  \|\la r\ra \nabla_x F\|_{\LE} + \|\la r\ra^2 \nabla_x \bar F\|_{\LE} + \| F\|_{\LE_{\Max}}).
\end{split}\]

If $R_2$ is large enough then this term is perturbative in  \eqref{Mzero-res0}, and the proof 
of \eqref{zero-res0} is concluded.

This concludes the proof for $m=0$. Higher spatial derivatives are
easily introduced in the argument in an elliptic fashion. Finally, the
same arguments as in Lemma \ref{l:levf} apply for $\Omega$ and $S$.
 \end{proof}

\newsection{ Charges and bounds for the radial part}
 
As explained earlier, the radial part of the solution is a good
approximation of the charge near spatial infinity. In particular, we
expect it to have better bounds (assuming the sources $G_1$ and $G_2$
have good decay at infinity), and we also expect it to not propagate in
a dispersive fashion along the cone.  However, there is some degree of
freedom in our choice of coordinates, and thus in what we call the
radial part. Hence, within our setup, there is some degree of mixing 
between radial and nonradial. The next result shows that the 
nonradial effects on the radial part have size $r^{-2}$; thus, as expected, 
they are weaker near infinity and stronger in a compact set. This 
is in a nutshell the content of the next lemma, which will come in very handily 
when we seek to propagate bounds for the radial part inside the cone, without any 
crossing penalty.

\begin{lemma}\label{impradpart}
  The radial part of $F$ satisfies the improved estimate
  \begin{equation}\label{zero-res1rad}
    \|\la r\ra^{\frac32} \bar F^{\leq m}(t_0)\|_{L^2(A_R)} \lesssim \sum_{i=1}^2\|\la r\ra^2 \bar G^{\leq m}_i (t_0)\|_{\LE^*} + \|\la r\ra^{-\frac12}F^{\leq m}(t_0)\|_{L^2(A_R)}.
  \end{equation} 
\end{lemma}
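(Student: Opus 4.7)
The plan is to derive and integrate first-order radial ODEs satisfied by the radial components of $F$ on a fixed time slice $t=t_0$. The equation $dF = G_1$ is purely topological, so extracting the $(r\phi\theta)$ component and averaging over $S^2$ yields the exact identity $\partial_r \overline{F_{\phi\theta}} = \overline{G_{1,r\phi\theta}}$. For the equation $d^0 \ast F = G^0_2$, I would rewrite it as $d^0 \bigstar F = G^0_2 - d^0(\ast - \bigstar)F$, apply the Minkowski identity which expresses $(\bigstar F)_{\phi\theta}$ as a multiple of $r^2 F_{tr}$, and use the commutator bound \eqref{raderrinfty}. This produces the perturbed ODE
\[
\partial_r\bigl(r^2\,\overline{F_{tr}}\bigr) = \overline{G_{2,r\phi\theta}} + \mathcal{E},
\]
where the pointwise error satisfies $|\mathcal{E}(r)| \lesssim r^{-1}|\partial_r \bar F| + r^{-2}(|\bar F| + \|\nabla F(r,\cdot)\|_{L^2(S^2)}) + r^{-3}\|F(r,\cdot)\|_{L^2(S^2)}$.

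Integration from spatial infinity, justified by the boundary condition \eqref{inf-bc}, yields $r^2\,\overline{F_{tr}}(r) = -\int_r^\infty(\bar G + \mathcal E)(s)\,ds$, with an analogous and simpler formula for $\overline{F_{\phi\theta}}$. Squaring, applying Cauchy--Schwarz with the integrable weight $s^{-2}$ (so that $r^4|\bar F(r)|^2 \lesssim r^{-1}\int_r^\infty s^2(|\bar G|^2 + |\mathcal E|^2)\,ds$), and integrating against $r\,dr$ over $A_R$ with Fubini reduces the task to bounding $\int_R^\infty s^3(|\bar G|^2 + |\mathcal E|^2)\,ds$. The source piece is controlled by $\|\la r\ra^2 \bar G\|_{\LE^*}^2$ via a routine dyadic summation.

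For the error contribution, the leading $r^{-3}\|F\|_{L^2(S^2)}$ term in $\mathcal E$ generates a geometric dyadic sum over $A_{2^k R}$: the $k=0$ contribution produces exactly the $\|\la r\ra^{-1/2}F\|_{L^2(A_R)}^2$ term on the RHS, while the $k \geq 1$ tails (which involve $F$ beyond $A_R$) must be absorbed into the source norm using the $\LE_{\Max}$ bound for $F$ provided by Lemma~\ref{impl:levf}, with the extra factors $r^{-n}$ in $\mathcal E$ supplying the required geometric convergence. The pieces of $\mathcal E$ involving $\nabla F$ and $\partial_r \bar F$ are treated by integrating by parts in $r$, transferring a derivative onto the slowly-varying metric weights at the cost of boundary contributions at $r = R$ which are dominated by the surviving terms. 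The extension from $m=0$ to $F^{\leq m}$ then follows by commuting the system with vector fields in $Z$ and applying the commutator machinery from Lemma~\ref{l:levf}. The principal technical obstacle lies in the tail analysis: ensuring that the far-field contributions of $F$ on annuli $A_{2^k R}$ for $k \geq 1$ are cleanly reabsorbed into the $\bar G$-source term without leaving residual global $F$-norms on the RHS.
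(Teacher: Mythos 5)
Your starting point is right --- integrate the averaged $(r\phi\theta)$ components of $dF=G_1$ and $d*F=G_2$ from infinity --- but the way you handle the metric perturbation creates a gap that the paper's proof is specifically designed to avoid. You rewrite $d^0 * F$ as $d^0\bigstar F + d^0(\ast-\bigstar)F$ and expand the second piece into an error $\mathcal E$ containing $\partial_r\bar F$, $\nabla F$, and $F$ at all radii $s\ge r$. After integrating from infinity, the term $\int_r^\infty s^{-3}\|F(s,\cdot)\|_{L^2(\S^2)}\,ds$ picks up contributions from every annulus $A_{2^kR}$ with $k\ge 1$, and --- as you yourself flag --- these cannot be charged to the $\bar G_i$ norms. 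Absorbing them via the zero resolvent bound of Lemma~\ref{impl:levf} is not an option: that lemma concerns the fixed-time system \eqref{Maxwfixedt} with sources $G_i^0$ (which contain $\partial_t F$), and more importantly it would leave a global norm of $F$ on the right-hand side of \eqref{zero-res1rad}. The statement has no such term, and its whole point --- exploited later when it is ``freely added'' to $M$ in Proposition~\ref{smallr} --- is that the $F$-dependence is purely local, confined to the single annulus $A_R$. So your argument, as written, proves a strictly weaker estimate.

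The fix is to not differentiate the perturbation at all. The paper writes the radial average of the second Maxwell equation as the exact total derivative
\[
\partial_r\bigl(r^2\,\overline{F}_{tr} + r^2\,\overline{(\ast-\bigstar)F}_{AB}\bigr) = r^2\,\overline{G_2}_{rAB},
\]
keeping $(\ast-\bigstar)F$ inside the $\partial_r$. Integrating from infinity then gives $r^2\overline{F}_{tr}(r) = -\,r^2\overline{(\ast-\bigstar)F}_{AB}(r) - \int_r^\infty s^2\overline{G_2}_{sAB}\,ds$, and since asymptotic flatness gives the pointwise bound $|(\ast-\bigstar)F|\lesssim r^{-1}|F|$ \emph{on the same sphere}, the extra term is $O(r|F|)$ evaluated at radius $r$ --- no derivatives of $F$, no tails over other annuli. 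Cauchy--Schwarz on the $G_2$ integral and evaluation on $A_R$ then give exactly \eqref{zero-res1rad0}. Your integration-by-parts device for the $\partial_r\bar F$ terms and the dyadic tail analysis both become unnecessary. The commutation with $Z$ at the end is in the same spirit as what you propose, using \eqref{omega*commut} and \eqref{S*commut} to control $\overline{[\L_X,\ast]F}$, again pointwise and locally in $r$.
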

\begin{proof}

 The estimate is obvious when $R\approx 1$. 
  When $R\gg 1$, we will use the original system \eqref{Maxwelleqns}, which in
  particular implies that
  \begin{equation}\label{radparteqn}
  \partial_r (r^2 \overline{F}_{AB}) = r^2 \overline{(G_1)}_{rAB},
  \qquad \partial_r (r^2(\overline{*F})_{AB}) = r^2 \overline{(G_2)}_{rAB}.
  \end{equation}

 Moreover, due to \eqref{Hodgeform} we have
\begin{equation}\label{radparteqn1}
(\overline{*F})_{AB} = (1+S_{rad}^Z (r^{-1}))\overline{F}_{tr} + S^Z(r^{-2})F,
\end{equation}
% and also, due to \eqref{}
% \begin{equation}\label{radparteqn2}
%  (\overline{*F})_{AB} = (1+S_{rad}^Z (r^{-1}))\overline{F}_{tr} + S^Z(r^{-2})F
% \end{equation}
as well as 
\begin{equation}\label{radparteqn3}
\partial_r (\overline{*F})_{AB} = (1+S_{rad}^Z (r^{-1}))\partial_r \overline{F}_{tr} 
+ S_{rad}^Z (r^{-2})  \overline{F}_{tr} +  S^Z(r^{-2}) \partial_r F+  S^Z(r^{-3})  F.
\end{equation}

  After integrating from infinity and applying the Schwarz inequality for
  the terms involving $G_i$, we obtain the desired conclusion for
  $m=0$:
  \begin{equation}\label{zero-res1rad0}
    \|r^{\frac32} \bar F(t_0)\|_{L^2(A_R)} \lesssim \sum_{i=1}^2\|r^2 \bar G_i (t_0)\|_{\LE^*} + \|r^{-\frac12}F(t_0)\|_{L^2(A_R)}.
  \end{equation}

  We now need to commute with vector fields in $Z$. After commuting \eqref{radparteqn} with $\partial_t$ and $\partial_r$ we easily obtain that
 \[
  \|r^{\frac32}  \partial_{t,r} \bar F(t_0)\|_{L^2(A_R)} \lesssim \sum_{i=1}^2\|r^2 \bar G^{\leq 1}_i (t_0)\|_{\LE^*} + \|r^{-\frac12}F^{\leq 1}(t_0)\|_{L^2(A_R)}.
 \] 
  Since $\partial_{x_i} f(t,r) \in S^Z(1) \partial_r f(t,r)$ for any radially symmetric function $f$, the inequality above also holds for all derivatives.
  
  On the other hand, for a vector field $X\in\{\Omega, S\}$ we know that $\L_X \bar F = \overline{\L_X F}$. After applying $\L_X$ to \eqref{Maxwelleqns} we get
  \[
  \partial_r (r^2 (\L_X \bar F)_{AB}) = r^2 (\L_X
    \bar G_1)_{rAB}, \qquad \partial_r (r^2 (\overline{*\L_X F})_{AB} + r^2
  (\overline{[\L_X, *]F})_{AB}) = r^2 (\L_X \bar G_2)_{rAB}.
  \]
 Due to \eqref{Hodgeform} we have
 \[
  (\overline{*\L_X F})_{AB} = (1+S_{rad}^Z (r^{-1}))(\L_X \bar F)_{tr} + S^Z(r^{-2})(\L_X F).
 \] 
   Thus after integrating from infinity and applying H\"older's inequality for
  the terms involving $G_i$, we obtain
  \begin{multline*}
  \|r^{\frac32} \L_X \bar F(t_0)\|_{L^2(A_R)} \lesssim
  \sum_{i=1}^2\|r^2 \L_X G_i (t_0)\|_{\LE^*} + \|r^{-\frac12}\L_X F(t_0)\|_{L^2(A_R)} \\+
  \|r^{\frac32}(\overline{[\L_X, *]F})_{AB}\|_{L^2(A_R)}.
  \end{multline*}
  When $X\in\Omega$ we see from \eqref{commut} and
  \eqref{omega*commut} that
  \[
  |[\L_X, *]F| \lesssim r^{-2} |F|.
  \]
  On the other hand, by using \eqref{S*commut} and the fact that
  $g_{sr}\in S^Z(r^{-2})$, we see that
  \[
  |(\overline{[\L_S, *]F})_{AB}| \lesssim r^{-1}|\overline{F}| + r^{-2}
  |F|.
  \]
  We thus obtain in both cases
  \[
  \|r^{\frac32} \L_X \bar F(t_0)\|_{L^2(A_R)} \lesssim
  \sum_{i=1}^2\|r^2 \L_X G_i (t_0)\|_{\LE^*} + \|r^{-\frac12}\L_X F(t_0)\|_{L^2(A_R)}.
  \]
 
  We can now use induction to conclude that \eqref{zero-res1rad} holds
  for all $m$.
 \end{proof}

%%%%%%%%%%%%%%%%%%%%%%%%%%%%%%%%%%%%%%%%%%%%%%%%%%%%%%%%%%%%%%%%%%%%%%%%%%%%%%%%%%%%%%%%%%
%%%%%%%%%%%%%%%%%%%%%%%%%%%%%%%%%%%%%%%%%%%%%%%%%%%%%%%%%%%%%%%%%%%%%%%%%%%%%%%%%%%%%%%%%%

\newsection{Proof of the main result}

  The proof of the main theorem will be divided into two parts. We
  first mimic the approach used in \cite{MTT} to prove Theorem
  \ref{mainscalar} to obtain pointwise bounds which are similar to those 
in the scalar  case:
  \begin{equation}\label{wkdecay}
    | F^{\leq  n}_{\alpha\beta}| \lesssim \frac{\kappa_1}{\la t\ra \la t-r\ra^{2}},\qquad | \nabla F^{\leq  n}_{\alpha\beta}| \lesssim \frac{\kappa_1}{\la r\ra \la t-r\ra^{3}}
  \end{equation}  
  where
  \[
  \kappa_1 = E^{n+m}(0) + \sum_{i=1}^2 \| t^{\frac52} G_i^{\leq n+m}\|_{LE^*} + \| t^{\frac52} r \overline {G_i}^{\leq n+m}\|_{LE^*}. 
  \]
    We then use \eqref{wkdecay} combined with the Maxwell system to
  improve the decay near the cone to the peeling estimates
  \eqref{pointwiseest1}.

 \subsection{ The Maxwell system as a wave equation.} 
 We start by rewriting the Maxwell system as a system of wave
  equations for each component. We have
\begin{equation}
  \nabla_{[\alpha} F_{\beta\gamma]} = G_{1\alpha\beta\gamma}, \qquad \nabla^{\alpha} F_{\alpha\beta} = -*G_{2\beta}.
  \label{maxwell-inhom1}\end{equation}

Differentiating the first equation we get
\begin{equation}
  \nabla^\alpha\nabla_{\alpha} F_{\beta\gamma} 
  + [\nabla^\alpha,\nabla_{\gamma}] F_{\alpha\beta}
  + [\nabla^\alpha,\nabla_{\beta}] F_{\gamma\alpha} \ =  \nabla^\alpha G_{1\alpha\beta\gamma} - \nabla_{[\beta}
  *G_{2\gamma]}
  \ , \notag
\end{equation}
where we have used $\nabla^\alpha F_{\alpha\beta}= -*G_{2\beta}$ in the
second and third term. The commutators are curvature contributions,
and cleaning these up we get:
\begin{equation}
  \Box_{g} F_{\alpha\beta} - R_{\alpha}^{\ \ \gamma}F_{\gamma\beta}
  - R_{\beta}^{\ \ \gamma}F_{\alpha\gamma}
  + R_{\alpha\beta}^{\ \ \ \gamma\delta}F_{\gamma\delta}
  \ = \  \nabla^\gamma G_{1\gamma\alpha\beta} - \nabla_{[\alpha}
  *G_{2\beta]} \ . \label{wave_form1}
\end{equation}
Here $\Box_{g}$ is the covariant wave equation acting on two forms; we
would like to replace this with $\Box_g [F_{\alpha\beta}]$, the
scalar d'Alembertian applied to each component separately. For that we compute 
\begin{multline}\label{gtoMink}
  \nabla^\gamma\nabla_{\gamma} F_{\alpha\beta} =
  \nabla^\gamma \partial_\gamma[F_{\alpha\beta}] -
  g^{\gamma\delta}\Bigl[\Gamma^\sigma_{\gamma\alpha,\delta}F_{\sigma\beta} -
  \Gamma^\sigma_{\gamma\beta,\delta} F_{\alpha\sigma} - 4
  \Gamma^\sigma_{\delta[\beta} F_{\alpha]\sigma,\gamma} \\+2
  \Gamma^\sigma_{\delta\gamma} \Gamma^\mu_{\sigma[\beta}F_{\alpha]\mu}
  + 2\Gamma^\mu_{\gamma\sigma} \Gamma^\sigma_{\delta[\beta}F_{\alpha]\mu}\Bigr].
\end{multline}
Taking into account that $g$ is in normalized coordinates, as well as \eqref{srderiv} (which in
particular imply that $\Gamma_{\alpha\beta}^{\gamma} \in S^Z
(r^{-2})$ and $R_{\alpha\beta\gamma\delta} \in S^Z(r^{-3})$), one easily obtains that each component $F_{\alpha\beta}$
satisfies
\begin{equation}\label{compnts}
  \Box_g [F_{\alpha\beta}] = Q_{\alpha\beta} \in S^Z(1)(G_1^{\leq 1}, G_2^{\leq 1}) + S^Z(r^{-2}) \partial F + S^Z(r^{-3}) F.
\end{equation}
The decay on the last term follows from \eqref{srderiv}, and it is
here that this hypothesis is crucial.

This immediately implies the analogous equation for $\Box
F_{\alpha\beta}$. After commuting with the vector fields in $Z$ we
also obtain by induction for all multi-indices $\Lambda$:
\begin{equation}\label{boxua}
  \Box F^{\Lambda}_{\alpha\beta} \in S^Z(1)(G_1^{\leq |\Lambda|+m}, G_2^{\leq |\Lambda|+m}) + S^Z(r^{-2})(\partial F^{\leq |\Lambda|+m}) + S^Z(r^{-3})(F^{\leq |\Lambda|+m}).
\end{equation}
Here and in the sequel $m$ will be a large enough number that could
change from equation to equation.

We can now apply Lemma 3.10 from \cite{MTT} which gives a first
pointwise estimate in terms of the local energy norms:
\begin{equation}\label{frstest}
  |F^{\Lambda}_{\alpha\beta}|\lesssim \frac{\log \la t-r\ra}{\la r\ra \la t-r\ra^{\frac{1}{2}}}
  \Bigl(\sum_{\alpha,\beta}\| F^{\leq |\Lambda| +
    m}_{\alpha\beta}\|_{LE^1} + \sum_{i=1}^2 \|\la r \ra G_i^{\le |\Lambda|+m}\|_{LE^{*}}\Bigr).
\end{equation}

At this point of the proof we would like to analyze what happens
inside the cone (the region $r\ll t$) and near the cone (the region
$t\approx r$) separately. In the first region we will use the zero
resolvent bound for the Maxwell system, while in the second region we will 
fall back onto the wave equation analysis  and use the
fundamental solution for the Minkowski wave equation.

\subsection{Notations and localized Klainerman-Sobolev bounds}
We first recall some notation from \cite{MTT}.  For the forward cone $C
= \{ r \leq t+R_1\}$ we consider a dyadic decomposition in time into
sets
\[
C_{T} = \{ T \leq t \leq 2T, \ \ r \leq t+R_1\}.
\]
For each $C_T$ we need a further double dyadic decomposition of it
with respect to either the size of $t-r$ or the size of $r$, depending
on whether we are close or far from the cone,
\[
C_{T} = \bigcup_{1\leq R \leq T/2} C_{T}^{R} \cup \bigcup_{1\leq U <
  T/2} C_T^{U}
\]
where for $R,U > 1$ we set
\[
C_{T}^{R} = C_T \cap \{ R < r < 2R \}, \qquad C_{T}^{U} = C_T \cap \{
U < t-r < 2U\}
\]
while for $R=1$ and $U= 1$ we have
\[
C_{T}^{R=1} = C_T \cap \{ R_0 < r < D \}, \qquad D \gg R_0
\]
\[
C_{T}^{U=1} = C_T \cap \{ -R_1 < t-r < 2\}.
\]
By $\tilde C_{T}^{R}$ and $\tilde C_{T}^{U}$ we denote enlargements of
these sets in both space and time on their respective scales.
We also
define 
\[
C_{T}^{<T/2} = \bigcup_{R < T/2} C_T^R,
\]
while $\tilde C_{T}^{<T/2}$ is a corresponding enlargement.
Finally, we will use the notation $C_{T}^{<T/2}(t_0) =
C_{T}^{<T/2}\cap \{t=t_0\}$ and similarly for $\tilde
C_{T}^{<T/2}(t_0)$.

The following Sobolev embeddings hold (see Lemma 3.8 from \cite{MTT}
for proof):
\begin{equation}
  \!  \| F_{\alpha\beta}\|_{L^\infty(C_T^{R})} \lesssim 
  \frac{1}{T^{\frac12} R^{\frac32}} \|F_{\alpha\beta}^{\leq 2}\|_{L^2( \tilde C_T^{R})} +  \frac{1}{T^{\frac12} R^{\frac12}}
  \|\nabla F_{\alpha\beta}^{\leq 2}\|_{L^2( \tilde C_T^{R})},
  \label{l2tolinf-r}\end{equation}
respectively
\begin{equation}
  \| F_{\alpha\beta}\|_{L^\infty(C_T^{U})} \lesssim \frac{1}{T^{\frac32} U^{\frac12}}  
  \|F_{\alpha\beta}^{\leq 2}\|_{L^2( \tilde C_T^{U})} +  \frac{U^{\frac12}}{T^{\frac32}}
  \|\nabla F_{\alpha\beta}^{\leq 2}\|_{L^2( \tilde C_T^{U})}.
  \label{l2tolinf-u}\end{equation}
 
We can now use \eqref{l2tolinf-r} and \eqref{l2tolinf-u} to improve the decay of the
derivative by a factor of $r^{-1}$ away from the cone, respectively by a factor of $\la t-r\ra^{-1}$ near the cone. We proved a similar type of result for the wave equation in \cite{MTT}, see Proposition 3.16, though the proof in that case was somewhat different.

 Let us first consider  the derivatives of $F$ in the region $C_T^R$. Clearly we have the bound 
$|e_{A,B} F_{\alpha\beta}^{\Lambda}| \lesssim \frac1r |F_{\alpha\beta}^{\Lambda+3}|$. For the time derivative, we use the fact that
\[
 \partial_t F_{\alpha\beta}^{\Lambda} = \frac1t S F_{\alpha\beta}^{\Lambda} - \frac{r}t \partial_r F_{\alpha\beta}^{\Lambda}. 
\] 
On the other hand, the Maxwell system gives us that
\[
 \partial_r F_{\alpha\beta}^{\Lambda} - \delta \partial_t F_{\tilde \alpha \tilde \beta}^{\Lambda} \in S^Z(1) (G_1^{\leq |\Lambda|+m}, G_2^{\leq |\Lambda|+m}) + S^Z(r^{-1}) F^{\leq |\Lambda|+m} 
\] 
for suitable $\tilde \alpha$ and $\tilde \beta$, where $\delta = \pm
1$ if $\alpha$ or $\beta$ equals $t$ or $r$  and $0$ otherwise. Combining the
last two relations we immediately get that
\begin{equation}\label{dtr-F}
 |\partial_{t, r} F_{\alpha\beta}^{\Lambda}| \lesssim \sum_{i=1}^2 |G_i^{\leq |\Lambda|+m}| + r^{-1}|F^{\leq |\Lambda|+m}|
\end{equation}

 After applying the Sobolev embeddings \eqref{l2tolinf-r} to $G_i$ we obtain that
\begin{equation} \label{l2toli:r}
R \| \nabla F_{\alpha\beta}^{\Lambda} \|_{L^\infty ( C_{T}^{R})} \lesssim \|F^{\leq |\Lambda|+m}\|_{L^\infty ( C_{T}^{R})} + \sum_{i=1}^2 T^{-\frac12}R^{\frac12}\|G_i^{\leq |\Lambda|+m}\|_{L^2 ( \tilde C_{T}^{R})}.
\end{equation}

 A similar argument applied in the region $C_T^U$ yields
\begin{equation}
    U \| \nabla F_{\alpha\beta}^{\Lambda} \|_{L^\infty ( C_{T}^{U})}
    \lesssim  \ \| F^{\leq |\Lambda| +m} \|_{L^\infty( 
      C_{T}^{U})} + T^{-\frac12} U^{\frac12} \sum_{i=1}^2\| G_i^{\leq
      |\Lambda|+m}\|_{L^2( \tilde C_{T}^{U})}.
  \label{l2toli:u}
  \end{equation}

\subsection{Improved bounds in the interior.} 
We will now obtain improved bounds for $F^{\Lambda}_{\alpha\beta}$ and
the gradient $\nabla F^{\Lambda}_{\alpha\beta}$ in the interior region
$C_{T}^{<T/2} $. We remark that the results in \cite{MTT}, namely
Proposition 3.14 and Proposition 3.15, which allow us to replace the
factor of $\la r\ra$ by a factor of $\la t\ra$ in the right hand side
of \eqref{frstest}, do not directly apply in the case of the Maxwell
system. Indeed, it is not clear why the stationary local energy decay
\eqref{sle} would hold for $F$ solving the system \eqref{compnts},
even if we assume that it holds for the corresponding scalar
equation. Moreover, we also need to deal with the presence of the
radial part of $F$, which decays at a different rate from the
nonradial part. Instead, we will be using the zero resolvent bound
\eqref{sle:vf} and Lemmas \ref{impl:levf} and \ref{impradpart} to
prove the following result which is similar to Proposition 3.15 in
\cite{MTT}.
  
\begin{proposition} \label{smallr} Assume that the solution to
  \eqref{Maxwelleqns} satisfies the zero resolvent bound
  \eqref{zero-res} for all $T\leq t_0\leq 2T$.  Then for  $m$
  large enough and any multiindex $\Lambda$ the following estimates
  hold:
 \begin{equation}\label{LEinsidedcy}
    \| \la r\ra^{-1}  F_{\alpha\beta}^{\Lambda}\|_{LE(C_T^{<\frac{T}2})} + \| \nabla F_{\alpha\beta}^{\Lambda}\|_{LE(C_T^{<\frac{T}2})} \lesssim M
  \end{equation}
and
\begin{equation}\label{insidedcy}
\begin{split}
    \!  \| F_{\alpha\beta}^{\Lambda}\|_{L^\infty(C_{T}^{<\frac{T}2})} +
\| \la r\ra \nabla F_{\alpha\beta}^{\Lambda}\|_{L^\infty(C_{T}^{<\frac{T}2})} \lesssim \tilde M
\end{split}
  \end{equation}
where 
 \begin{equation*}
\begin{split}
   M =  T^{-1} \|F^{\leq |\Lambda|+m}\|_{LE(\tilde C_{T}^{<T/2})} +
 \sum_{i=1}^2 \left( \|\la r\ra^{-1} G_i^{\leq |\Lambda|+m} \|_{LE^*(C_T)} + 
\| \la r\ra  \bar G_i^{\leq |\Lambda|+m} \|_{LE^*(C_T)}\right),    
\end{split}
  \end{equation*}
\begin{equation*}
\tilde M = T^{-\frac12} M + \sup_{R<T/2}\sum_{i=1}^2 T^{-\frac12} R^{\frac12}\|G_i^{\leq |\Lambda|+m}\|_{L^2 ( \tilde C_{T}^{R})}.
\end{equation*}  
\end{proposition}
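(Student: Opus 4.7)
The plan is to mirror the argument of \cite{MTT}, Proposition 3.15, replacing the scalar stationary local energy decay bound by the zero resolvent bound for the Maxwell system. Since the zero resolvent bound is a fixed-time statement, I would first apply it at each $t_0\in [T,2T]$ and then integrate in time over this dyadic interval to produce an $LE$-type bound. The key conceptual point is that the $d^0$ operator in \eqref{Maxwfixedt} involves only spatial derivatives, so passing from $d$ to $d^0$ converts the time derivative of $F$ into a source term — and the small factor $T^{-1}$ available in the interior is exactly what allows this contribution to be controlled.

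To extract the $T^{-1}\|F^{\leq |\Lambda|+m}\|_{LE(\tilde C_T^{<T/2})}$ contribution in $M$, I would localize $F$ in time using a smooth cutoff $\chi$ supported in a slight enlargement of $[T,2T]$ with $\chi=1$ on $[T,2T]$ and $|\chi'|\lesssim T^{-1}$. The field $\chi F$ satisfies a perturbed Maxwell system whose sources are $\chi G_i$ plus a $\chi'\cdot F$ correction; the $T^{-1}$ factor from $\chi'$ is what feeds into the first term of $M$. Applying Lemma \ref{impl:levf} in the form \eqref{zero-res1} (extended to the full vector field set via Lemma \ref{l:levf}) to $\chi F$ at each time $t_0$ yields a fixed-time bound whose right-hand side controls $\|\la r\ra^{-1}G^{0,\leq|\Lambda|+m}_i\|_{\LE^*}$ and $\|\la r\ra \overline{G_i^0}^{\leq|\Lambda|+m}\|_{\LE^*}$. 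Using $G^0_i = G_i - dt\wedge \L_{\partial_t}(\cdot)$ and the time cutoff, these decompose into the $G_i$ source terms appearing in $M$ and the $T^{-1}\|F\|_{LE}$ correction coming from $\chi' F$.

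Squaring and integrating the fixed-time bound over $t_0$, combined with a spatial cutoff localizing to $C_T^{<T/2}$, gives the half of \eqref{LEinsidedcy} corresponding to $\|\la r\ra^{-1} F^{\leq|\Lambda|+m}\|_{LE(C_T^{<T/2})}\lesssim M$. The gradient bound $\|\nabla F^{\leq|\Lambda|+m}\|_{LE(C_T^{<T/2})}\lesssim M$ then follows by combining this with standard elliptic theory applied to the first-order $d^0$ system (valid in the interior where $\partial_t$ is timelike) for spatial derivatives, and using the Maxwell system directly to express $\partial_t F$ in terms of spatial derivatives of $F$ and the source $G_i$.

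For the pointwise bound \eqref{insidedcy}, I would apply the Sobolev embedding \eqref{l2tolinf-r} on each dyadic shell $C_T^R$ with $R<T/2$, combined with \eqref{LEinsidedcy} after commuting with two additional vector fields; the factor $T^{-1/2}R^{-1/2}$ from \eqref{l2tolinf-r} combined with the $\la r\ra^{-1}$ weight in \eqref{LEinsidedcy} produces the $T^{-1/2}$ gain in $\tilde M$. For $\nabla F^\Lambda$, the refined bound \eqref{l2toli:r} provides the extra $R^{-1}$ gain, matching the $\la r\ra$ weight on $\nabla F^\Lambda$ in \eqref{insidedcy}. The main obstacle is the simultaneous treatment of the radial and nonradial parts of $F$, which obey different weighted estimates at infinity: the nonradial part is handled directly by the elliptic zero resolvent bound, while the radial part requires the sharper weighted bound from Lemma \ref{impradpart} to accommodate the heavier weight $\la r\ra$ in $\LE_{\Max}$. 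Extracting the $T^{-1}$ factor for $\partial_t F$ without a circular argument is the technical key, resolved by the time-cutoff trick described above.
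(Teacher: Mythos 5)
There is a genuine gap at the step you yourself flag as ``the technical key'': the extraction of a $T^{-1}$ factor on the $\partial_t F$ contribution to the fixed-time sources. A time cutoff $\chi(t)$ does not accomplish this. Writing $G_1^0 = G_1 - dt\wedge\L_{\partial_t}F$ as in \eqref{d0def}, the cutoff produces $\L_{\partial_t}(\chi F) = \chi'F + \chi\,\L_{\partial_t}F$; only the commutator term $\chi'F$ carries the factor $T^{-1}$, while the bulk term $\chi\,\L_{\partial_t}F$ enters the source for \eqref{Maxwfixedt} at full strength. Since $\|\partial_t F\|_{LE}$ is (after commuting with $\partial$) of the same size as the quantity you are trying to bound, feeding it into the right-hand side of \eqref{zero-res1} with no smallness makes the argument circular. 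The cutoff in the paper's proof (which is a space-time cutoff $\chi_T(t,r)$ adapted to $\tilde C_T^{<T/2}$, not purely temporal) serves only to localize and to generate the $T^{-1}\|F\|_{LE(\tilde C_T^{<T/2})}$ term in $M$; it is not what controls $\partial_t F$.

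The missing idea is the use of the scaling field as a proxy for the time derivative: on $C_T$ one writes $\L_{\partial_t}F = \tfrac1t\bigl(\L_S F - r\L_{\partial_r}F - \cdots\bigr)$, and since $t\approx T$ this is what produces the genuine $T^{-1}$ gain in \eqref{G_i^0}. Even then the resulting term $T^{-1}\|\la r\ra\,\partial_r F\|$ is not directly dominated by $M$ (the weight is off by a full power of $\la r\ra$ relative to the left side of \eqref{LEinsidedcy}), so a second ingredient is needed: the interpolation bounds $\phi^{|\Lambda|+m,\frac12}\lesssim(\phi^{|\Lambda|,1}\phi^{|\Lambda|+2m,0})^{\frac12}$ between the weighted families $\phi^{\cdot,h}$, $\bar\phi^{\cdot,h}$, followed by Cauchy--Schwarz to absorb $\phi^{|\Lambda|,1}$ into the left-hand side. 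Neither step appears in your outline. The remaining structure of your argument --- invoking Lemma \ref{impradpart} for the radial part, applying \eqref{zero-res1} slice by slice and integrating in $t_0$, and deriving \eqref{insidedcy} from \eqref{LEinsidedcy} via \eqref{l2tolinf-r} and \eqref{l2toli:r} --- does match the paper, but without the $S$-proxy and the interpolation/absorption the proof does not close.
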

 
\begin{proof}
  The main estimate here is the local energy bound
  \eqref{LEinsidedcy}.  Indeed, \eqref{insidedcy} follows from
  \eqref{LEinsidedcy} via the Klainerman-Sobolev type bounds
  \eqref{l2tolinf-r} and \eqref{l2toli:r} applied successively in all
  dyadic regions $R<\frac{T}2$. We remark that \eqref{LEinsidedcy} is
  the analogue of Proposition 3.14 in \cite{MTT}.

  To prove \eqref{LEinsidedcy}, we first note that, in view of
  Lemma~\ref{impradpart}, we can freely add to $M$ the corresponding
  bound for the radial part,
\[
M := M + T^{-1}\|\la r\ra^2 \bar F^{\leq |\Lambda|+m}\|_{LE(\tilde C_{T}^{<T/2})}.
\]
The next step is to localize the problem to $C_T$.
Let $\chi_T (t,r)$ be a nonnegative
smooth cutoff supported in $\tilde C_{T}^{<T/2}$ so that $\chi_T \equiv 1$ in
$C_{T}^{<T/2}$.  We replace $F$ with the tensor $\tilde F =
\chi_T F$.  We see that $\tilde F$ satisfies the system
  \[
  d\tilde F = \tilde G_1:= \chi_T G_1+ d \chi_T \wedge F, \qquad d*\tilde F = \tilde G_2:=\chi_T G_2+ d \chi_T \wedge * F.
 \]

  Clearly $\nabla \chi_T$ is supported in $\tilde C_{T}^{<T/2} \setminus C_{T}^{<T/2}$ and the cutoff can be chosen so that $|\nabla \chi_T| \lesssim T^{-1}$. We thus obtain that
\begin{multline*}
 \|\la r\ra^{-1} d \chi_T \wedge F\|_{LE^*(C_T)} + \| \la r\ra \overline
 {d \chi_T \wedge F}\|_{LE^*(C_T)} + \| \la r\ra \overline
 {d \chi_T \wedge *F}\|_{LE^*(C_T)} \\\lesssim T^{-1} \|F\|_{LE(\tilde
   C_{T}^{<T/2})}
 + T^{-1}\|\la r\ra^2 \overline {F}\|_{LE(\tilde C_{T}^{<T/2})}
\end{multline*}
where for the last term on the LHS we used that
\[
\overline{*F} \in S^Z(1)(\bar F) + S^Z(r^{-2})F
\]

After taking Lie derivatives, it is now easy to see that $\tilde G_i$ satisfies the inequality
\[
\|\la r\ra^{-1} \tilde G_i^{\leq \Lambda+m} \|_{LE^*(C_T)} + \| \la r\ra  \bar {\tilde{ G}}_i^{\leq \Lambda+m} \|_{LE^*(C_T)}
\lesssim M.
\]
Thus, from here on we assume that $F$ is (spatially) supported in $C_T^{<T/2}$ and drop the $\tilde F$ notation. 

The next step is to see that we can further replace $M$ by 
\[
M := M + T^{-1}(    \|\la r\ra \partial_r   F^{\leq |\Lambda|+m}\|_{LE(\tilde C_{T}^{<T/2})}   +  
 \|\la r\ra^3 \partial_r  \bar F^{\leq |\Lambda|+m}\|_{LE(\tilde C_{T}^{<T/2})}).
\]
Indeed, the first term on the right is estimated in terms of $M$ using the pointwise bound 
\eqref{dtr-F}, and for the second we use the relations \eqref{radparteqn1}-\eqref{radparteqn3}.

We now introduce the quantities
\[
\gamma^{|\Lambda|} = \sum_{i=1}^2   \|\la r\ra^{-1} G_i^{\leq |\Lambda|} \|_{LE^*(C_T)} ,
\qquad  
\bar \gamma^{|\Lambda|} =  \sum_{i=1}^2    \| \la r\ra  \bar G_i^{\leq |\Lambda|} \|_{LE^*(C_T)}
\]
respectively, with $h \in [0,1]$, 
\[
\phi^{|\Lambda|,h} =  T^h ( \| \la r\ra^{-h} F^{\leq |\Lambda|} \|_{LE(C_T)}
+ \| \la r\ra^{1-h} \nabla_x F^{\leq |\Lambda|} \|_{LE(C_T)}),
\]
\[
\bar \phi^{|\Lambda|,h} =  T^h ( \| \la r\ra^{2-h} \bar F^{\leq |\Lambda|} \|_{LE(C_T)}
+ \| \la r\ra^{3-h} \nabla_x \bar F^{\leq |\Lambda|} \|_{LE(C_T)}).
\]
With these notations, the bound to prove becomes
\begin{equation}\label{gamma-phi}
\phi^{|\Lambda|,1} + \bar \phi^{|\Lambda|,1} \lesssim \phi^{|\Lambda|+m,0} + \bar \phi^{|\Lambda|+m,0}
+ T \gamma^{|\Lambda|+m} + T \bar \gamma^{|\Lambda|+m}.
\end{equation}
Indeed, the time derivatives can be easily estimated afterwards by
  using either the Maxwell system or the scaling vector field $S$.

In order to use the bounds in Lemma~\ref{impl:levf} we need to convert 
the Maxwell system \eqref{Maxwelleqns} into the $d^0$ system \eqref{Maxwfixedt} and estimate the source terms $G_i^0$. We will show that
\begin{equation}\label{G_i^0} \begin{split}
& G^{0, \Lambda}_i \in S^Z(1) G^{\Lambda}_i + \frac1t dt \wedge S^Z(1) (F^{\leq |\Lambda|+m}, r\partial_r F^{\leq |\Lambda|+m}) \\
& \overline{G_i^0}^{\Lambda} \in S^Z(1) \overline{G_i}^{\Lambda} + \frac1t dt \wedge \bigl[S^Z(1) (\bar F^{\leq |\Lambda|+m}, r\partial_r \bar F^{\leq |\Lambda|+m}) + S^Z(r^{-2}) (F^{\leq |\Lambda|+m}, r\partial_r F^{\leq |\Lambda|+m}) \bigr]
\end{split} \end{equation}

 For this we use the scaling field $S$ as a proxy for $\partial_t$ to compute via \eqref{d0def}:
  \begin{equation}\label{G0} \begin{split}
   &   G^0_1 (t) =   G_1(t) - dt\wedge\L_{\partial_t}   F =    G_1(t) - \frac{1}{t}dt\wedge(\L_{S}   F - r \L_{\partial_r}   F - F_{r\phi}d\phi\wedge dr - F_{r\theta}d\theta\wedge dr) \\
      & \overline{G^0_1} (t) = \overline{G_1}(t) - dt\wedge\L_{\partial_t}  \bar F = \overline{G_1}(t) - \frac{1}{t}dt\wedge(\L_{S}  \bar F - r \L_{\partial_r} \bar  F)
 \end{split} \end{equation}
  
   We also need to take Lie derivatives in \eqref{G0}.
This is done using
  \[
  \L_X   G^0_1 (t) = \L_X   G_1(t) - dt\wedge\L_{\partial_t}
  F^{\le m}, \qquad \L_X   \overline{G^0_1} (t) = \L_X  \overline{G_1}(t) - dt\wedge\L_{\partial_t}
  \bar F^{\le m},
  \]

This  is immediate for $X\in \{\partial, \Omega\}$ and a simple computation
  for $X=S$. The desired result \eqref{G_i^0} for $G_1$ follows by induction.

 The proof of \eqref{G_i^0} for $G_2$ follows by applying the arguments above to $*F$ instead of $F$ and using the fact that
 \[
 \overline{*F} \in S^Z(1) \bar F + S^Z(r^{-2}) F
 \]
  
  We will now bound  the sources $G^0_i$ and their Lie derivatives  in $C_T$. We begin with the nonradial part, for which we have
  \begin{equation}\label{G0nonrad1/2}
\begin{split}
    \|\la r\ra^{-1} G^{0,\leq |\Lambda|}_i\|_{LE^*(C_T)} \lesssim & \
    \| \la r\ra^{-1} G^{\leq |\Lambda|}_i\|_{LE^*(C_T)} \\
&\qquad\qquad+ 
\frac1T \Bigl(\|\la r\ra^{-1}  F^{\leq
  |\Lambda|+m}\|_{LE^*(C_T)} +  \| \partial_r   F^{\leq |\Lambda|}\|_{LE^*(C_T)}\Bigr)
\\\lesssim  &\ 
  \gamma^{|\Lambda|}+  T^{-1}\phi^{|\Lambda|+m,\frac12}.
\end{split}  
\end{equation}
On the other hand for the radial part we have
  \begin{equation}\label{G0rad1/2}
\begin{split}
    \|\la r\ra \overline {G_i^0}^{\leq |\Lambda|}\|_{LE^*(C_T)}
    \lesssim & \ \|\la r\ra \bar G^{\leq
        |\Lambda|}_i\|_{LE^*(C_T)} 
\\&\ + \frac1T \Bigl(\|\la r\ra \bar F^{\leq |\Lambda|+m} \|_{LE^*(C_T)}
 + \|\la r\ra^2 \partial_r \bar F^{\leq |\Lambda|+m}\|_{LE^*(C_T)} 
\\ & \quad \quad + \|\la r\ra^{-1}  F^{\leq |\Lambda|+m}\|_{LE^*(C_T)}+ \| \partial_r  F^{\leq |\Lambda|+m}\|_{LE^*(C_T)}\Bigr)
\\
 \lesssim & \  \bar \gamma^{|\Lambda|} + T^{-1}\bar
 \phi^{|\Lambda|+m,\frac12}+  T^{-1}\phi^{|\Lambda|+m,\frac12}.
\end{split}  
\end{equation}
By interpolation we have bounds of the type 
\[
\phi^{|\Lambda|+m,\frac12} \lesssim (\phi^{|\Lambda|,1} \phi^{|\Lambda|+2m,0})^\frac12.
\]

Viewed in polar self-similar coordinates in $C^R_T$, these interpolation bounds are nothing but standard 
Sobolev bounds in a unit cube.  Using the interpolation estimates, from \eqref{G0nonrad1/2}
and \eqref{G0rad1/2} we have
\[ \begin{split}
 \|\la r\ra^{-1} G^{0\leq |\Lambda|}_i\|_{LE^*(C_T)} +  \|\la r\ra \overline{  G_i^0}^{\leq {|\Lambda|}}\|_{LE^*(C_T)} 
& \lesssim  \gamma^{|\Lambda|}+ \bar \gamma^{|\Lambda|} + 
  T^{-1} (\phi^{|\Lambda|,1} \phi^{|\Lambda|+2m,0})^\frac12 \\ &+
 T^{-1}(\bar \phi^{|\Lambda|,1} \bar \phi^{|\Lambda|+2m,0})^\frac12.
\end{split} \]

Now we can apply the zero resolvent bound \eqref{zero-res1}. Using
also the bounds \eqref{dtr-F} to estimate $\partial_r F^{\leq
  |\Lambda|}$, as well as \eqref{radparteqn1}-\eqref{radparteqn3} to
estimate $\partial_r \bar F^{\leq |\Lambda|}$, we obtain
\[
\phi^{|\Lambda|,1} + \bar \phi^{|\Lambda|,1} \lesssim  
  T\gamma^{|\Lambda|}+  T\bar \gamma^{|\Lambda|} +  (\phi^{|\Lambda|,1} \phi^{|\Lambda|+2m,0})^\frac12 +  (\bar \phi^{|\Lambda|,1} \bar \phi^{|\Lambda|+2m,0})^\frac12.
\]
Then the desired estimate \eqref{gamma-phi} follows by Cauchy-Schwarz.
\end{proof}

We will now prove \eqref{wkdecay} in the same way as in \cite{MTT}, by
a bootstrap procedure. The starting point is the pointwise bound
\eqref{frstest}. This can be improved by replacing the $r^{-1}$ factor
by a $t^{-1}$ factor, and complemented by a better bound for the
derivative near the cone. Indeed, by using \eqref{frstest} and
H\"older's inequality in \eqref{insidedcy} we obtain
\[
|F^{\Lambda}_{\alpha\beta}|\lesssim C_1 \frac{\log \la t-r\ra}{t \la
  t-r\ra^{\frac{1}{2}}},
\]
whereas using \eqref{frstest} in \eqref{l2toli:u} yields
\[
|\nabla F^{\Lambda}_{\alpha\beta}|\lesssim C_1 \frac{\log \la
  t-r\ra}{\la r \ra \la t-r\ra^{\frac{3}{2}}}.
\]
Here
\[
C_1 = \| F^{\leq |\Lambda|+m}\|_{LE} + \sum_{i=1}^2 \sup_{R,U} R^{\frac12} T^{\frac12}U^{\frac12} \|
G_i^{\leq |\Lambda| + m}\|_{L^2(C_T^{R,U})} + T \|r \bar G_i^{\leq |\Lambda| + m}\|_{LE^*(C_T)}.
\]
where, following \cite{MTT}, $C_T^{R,U}$ stands for either $C_T^R$
or $C_T^U$, with the convention that $R \approx T$ in $C_T^U$ and $U
\approx T$ in $ C_T^R$.

\subsection{Uniform pointwise bounds} 

We can now use the improved estimates in a bootstrap procedure similar
to the one in \cite{MTT}. The first step is to note that for a
solution to the Minkowski wave equation
\[
\Box w = f, \qquad w(0) = \partial_t w(0) = 0
\]
so that $f$ is supported in the forward cone $\{ t > |r|\}$
we can estimate
\begin{equation}\label{1D}
|w| \lesssim \frac1r \int_{D_{tr}} \int_{\S^2} \rho |f^{\leq m}| d\omega ds
d\rho 
\end{equation}
where $D_{tr}$ is the rectangle
\[
D_{tr}=\{ 0 \leq s - \rho \leq t-r, \quad t-r \leq s+\rho \leq t+r \}.
\]
We call this computation the one dimensional reduction; this is
fairly standard, and it is explained in detail in \cite{MTT}. By using
the above estimate in conjunction with \eqref{boxua} we improve our
estimate near the cone to
\[
|F^{\Lambda}_{\alpha\beta}| \lesssim C_2 \frac{\log\la t-r\ra}{\la r
  \ra \la t-r \ra}
\]
where
\[
C_2 = \| F^{\leq |\Lambda|+m}\|_{LE} + \sum_{i=1}^2 \sup_{R,U} TR^{\frac12}U^{\frac12} \|
G_i^{\leq |\Lambda| + m}\|_{L^2(C_T^{R,U})} + T^{\frac32}  \|
r \bar G_i^{\leq |\Lambda| + m}\|_{LE^*(C_T)}.
\]

Using this in \eqref{insidedcy} and \eqref{l2toli:u} improves the above estimate
to
\[
|F^{\Lambda}_{\alpha\beta}| \lesssim C_2 \frac{\log\la t-r\ra}{\la t
  \ra \la t-r \ra}, \qquad |\nabla F^{\Lambda}_{\alpha\beta}| \lesssim
C_2 \frac{\log\la t-r\ra}{\la r \ra \la t-r \ra^2}.
\]

One then again uses the pointwise estimates above in the one
dimensional reduction to improve the pointwise bounds near the cone,
followed by improving the bound inside through \eqref{insidedcy} and
derivative bounds near the cone through \eqref{l2toli:u}; see
\cite{MTT} for more details. After two more iterations,
\eqref{wkdecay} follows.

\subsection{Peeling estimates} 

Our starting point here is the bound \eqref{wkdecay}, which was
largely obtained using the wave equation solved by $F$. However, while
bounds of the form \eqref{wkdecay} are optimal for the scalar wave
equation, they are not optimal in the case of the Maxwell
tensor. Heuristically, this is due to the fact that at the
leading order the electromagnetic tensor $F$ is actually the
derivative of a potential, and derivatives of the solution decay better away from the
light cone even in the case of a scalar wave equation.
 
In order to improve our estimates, we first note that \eqref{wkdecay}
holds for the tensor components evaluated in the null frame. We shall
use the standard notation
\[
\phi_{-, A} = F_{uA}, \qquad \phi_0 = \frac12 (F_{uv}+iF_{AB}), \qquad
\phi_{+, A} = F_{vA}.
\]
 
We first note that $\partial_v F_{\alpha\beta}$ satisfies a better decay
bound than \eqref{wkdecay} near the cone. Indeed, since
\[
\partial_v = \frac1t S + \frac{t-r}t \partial_r
\]
by \eqref{wkdecay} we have 
\[
|\partial_v F^{\Lambda}| \lesssim \kappa_1 \frac{1}{\la r\ra \la t\ra
  \la t-r\ra^{2}}.
\]
It is also immediate that since $e_{A,B} \approx \frac1r \Omega$ we
 have
\[
|e_{A,B} F^{\Lambda}| \lesssim \kappa_1 \frac{1}{\la r\ra \la t\ra \la
  t-r\ra^{2}}.
\]
  
We would now like to improve the $\partial_u F$ term. By using the
Maxwell system, in particular
\[
\nabla^{\alpha }F_{\alpha u} \in S^Z(1) G_2, \qquad \nabla_{[u}F_{AB]} \in S^Z(1) G_1,
\qquad \nabla_{[u}F_{vA]} \in S^Z(1) G_1
\]
and \eqref{wkdecay} one obtains improved bounds for $\partial_u
\phi_0^{\Lambda}$ and $\partial_u F_{vA}^{\Lambda}$:
 \[
 |\partial_u \phi_0^{\Lambda}|, |\partial_u F_{vA}^{\Lambda}| \lesssim
 \kappa_1 \frac{1}{\la r\ra \la t\ra \la t-r\ra^{2}}.
 \]
 After integration on constant $v$ slices, one can improve the
 pointwise bounds near the cone to
 \begin{equation}\label{impcone}
   |\phi_0^{\Lambda}|, |F_{vA}^{\Lambda}| \lesssim \kappa_1 \frac{1}{ \la t\ra^2 \la t-r\ra}.
 \end{equation}

 In order to continue, let $\hat{\phi_0}:= (*F)_{AB} + i F_{AB}$.  We
 will derive the wave equation that $r\hat{\phi_0}$ satisfies.
 
\begin{lemma}\label{rphi}
 The middle component $\hat{\phi_0}$ satisfies
\begin{equation}\label{midcomp} \begin{split}
     \Box (r \hat{\phi_0}^{\Lambda}) \in & S^Z(r)(G_1^{\leq |\Lambda| +m},
     G_2^{\leq |\Lambda| +m}) + \partial(S^Z(r^{-1}) \hat{\phi_0}^{\leq
       |\Lambda| +m}) + \\ & S^Z(r^{-2})(\hat{\phi_0}^{\leq |\Lambda| +m})+
     \partial (S^Z (r^{-2}) F^{\leq |\Lambda| +m}) + S^Z (r^{-3})
     (F^{\leq |\Lambda| +m}). \end{split}
 \end{equation}
\end{lemma} 
\begin{proof}
 
% Let us first consider the spherically symmetric part $\tilde g = m+g_{lr}$ of the metric, and the associated Maxwell system
%\[
%d F = \tilde G_1, \quad d *_{\tilde g} F = \tilde G_2
%\] 

 For spherically symmetric metrics of the form $h_{ab}dx^a dx^b + r^2d\omega^2$, the lemma has been proved in Section 4 of \cite{StTat}. We will adapt their proof for our more general case.
 
 Let us consider the metric (recall \eqref{normlr})
 \[
 \hat{g}= r^{-2}(1+g_\omega(r))^{-1} g = \tg + r^{-2} (1+g_\omega(r))^{-1}g_{sr}, \quad \tg := r^{-2}(1+g_{\omega}(r))^{-1}(-dt^2+dr^2)+ d\omega^2.
 \] 
 We note that in Cartesian coordinates the the metric coefficients of $\hat{g}$ and $\tilde g$ and their corresponding Christoffel symbols and curvature tensor satisfy the relations
\begin{equation}\label{confcurv}
\hat{g}_{\alpha\beta} \in S^Z(r^{-2}), \quad \hat{\Gamma}_{\alpha\beta}^{\gamma}\in S^Z(r^{-1}), \quad \hat{R}_{\alpha\beta\gamma}^{\delta} \in S^Z(r^{-2})
\end{equation}
\begin{equation}\label{confdiff}
\hat{g}_{\alpha\beta}-\tg_{\alpha\beta} \in S^Z(r^{-4}), \quad \hat{\Gamma}_{\alpha\beta}^{\gamma} - \tilde \Gamma_{\alpha\beta}^{\gamma} \in S^Z(r^{-3}), \quad \hat{R}_{\alpha\beta\gamma}^{\delta} - 
\tilde R_{\alpha\beta\gamma}^{\delta} \in S^Z(r^{-4}).
\end{equation}
 We will also use the spherical coordinates $(t, r, \phi,\theta)$ so
 that locally 
$\{e_A, e_B\} = \{\frac1r \partial_{\phi}, \frac1{r\sin\theta}\partial_{\theta}\}$. 
% We also use the notation $\hat{\Gamma}$ and $\hat{R}$ to denote the Christoffel symbols and curvature tensor of the metric $\hat{g}$ with respect to these coordinates. It is easy to see that 
%\begin{equation}\label{confcurv}
%\hat{\Gamma}\in S^Z(r^{-2}), \qquad \partial \hat{\Gamma}\in S^Z(r^{-3}), \qquad \hat{R}\in S^Z(r^{-3})
%\end{equation} 
  Since the Maxwell system is conformally invariant, we have due to \eqref{wave_form1} that
\[
\Box_{\hat{g}} F_{CD} - \hat{R}_{C}^{\ \ \gamma}F_{\gamma D}
  - \hat{R}_{D}^{\ \ \gamma}F_{C\gamma}
  + \hat{R}_{CD}^{\ \ \ \gamma\delta}F_{\gamma\delta} \in S^Z(r^4) (G_1^{\leq 1}, G_2^{\leq 1}), \qquad C, D\in\{\phi, \theta\}.
\] 
 Moreover the metric $\tg$ is of the form described in Section 4 of \cite{StTat}, so it is easy to check that
\[
-\tilde R_{C}^{\ \ \gamma}F_{\gamma D}
  - \tilde R_{D}^{\ \ \gamma}F_{C\gamma}
  + \tilde R_{CD}^{\ \ \ \gamma\delta}F_{\gamma\delta} = 0,
\] 
which implies, due to \eqref{confdiff} that
\[
(\hat{R}_{C}^{\ \ \gamma}-\tilde R_{C}^{\ \ \gamma}) F_{\gamma D}, \quad (\hat{R}_{C}^{\ \ \gamma}-\tilde R_{D}^{\ \ \gamma}) F_{C\gamma},  \quad (\hat{R}_{CD}^{\ \ \ \gamma\delta} - \tilde R_{CD}^{\ \ \ \gamma\delta}) F_{\gamma\delta} \in S^Z(1) F.
\] 
 We thus obtain 
\begin{equation}\label{FCD1}
\Box_{\hat{g}} F_{CD} \in S^Z(r^4) (G_1^{\leq 1}, G_2^{\leq 1}) + S^Z(1) F.
\end{equation}

 We can now trace with respect to the the volume form $\epsilon^{CD}$ of $\S^2$. We obtain, with $\alpha$ signifying Cartesian coordinates:
\[
\Box_{\hat{g}} (\epsilon^{CD} F_{CD}) - \epsilon^{CD} \Box_{\hat{g}} F_{CD} = 2\hat{\nabla}_{\alpha} ((\hat{\nabla}^{\alpha}\epsilon^{CD}) F_{CD}) - (\hat{\nabla}_{\alpha}\hat{\nabla}^{\alpha} \epsilon^{CD}) F_{CD}.
\] 

 Since for the spherically symmetric metric $\tilde g$ we know that $\tilde \nabla \epsilon = 0$, we obtain that
\[
\hat{\nabla}^{\alpha}\epsilon^{CD} \in S^Z(r^{-1})
\] 
and thus, taking into account that $F_{CD} \in S^Z(r^2) F$,
\begin{equation}\label{FCD2}
\Box_{\hat{g}} (\epsilon^{CD} F_{CD}) - \epsilon^{CD} \Box_{\hat{g}} F_{CD} \in \partial (S^Z(r) F) + S^Z(1) F. 
\end{equation}
 Finally, \eqref{FCD1}, \eqref{FCD2} and the fact that $\epsilon^{CD} F_{CD} = r^2 F_{AB}$ imply
\begin{equation}\label{FAB}
\Box_{\hat{g}} [r^2 F_{AB}] \in S^Z(r^4) (G_1^{\leq 1}, G_2^{\leq 1}) + S^Z(1) F + \partial (S^Z(r) F).
\end{equation}

On the other hand, for any function $\psi$ we have
\begin{equation}\label{confwave}
\Box_g \psi -\frac16 R \psi = r^{-3} \Bigl(\Box_{\hat{g}} (r\psi) -\frac16 \hat{R} (r\psi)\Bigr).
\end{equation}
In particular for $\psi= r F_{AB}$ we obtain, using \eqref{FAB},
\[
\Box_g [r F_{AB}] \in S^Z(r) (G_1^{\leq 1}, G_2^{\leq 1}) + S^Z(r^{-2})(F_{AB})+ \partial (S^Z (r^{-2}) F) + S^Z (r^{-3}) (F).
\]
One obtains a similar equation for $(*F)_{AB}$. We thus get
\[
\Box_g [r \hat{\phi_0}] \in S^Z(r) (G_1^{\leq 1}, G_2^{\leq 1}) + S^Z(r^{-2})(\hat{\phi_0})+ \partial (S^Z (r^{-2}) F) + S^Z (r^{-3}) (F).
\]
% We can conformally transform $\tilde g$ to $\tilde h := (1+ g_{\omega}(r))^{-1}\tilde g$ to be of the form above. We can now apply (4.6) from \cite{StTat} to obtain \footnote{We remark that our scaling differs from the one used in \cite{StTat} by a factor of $r^2$}
%\[
%\Box^0 (r^2 \phi_0) = \nabla (S^Z(r^) \tilde G_{1,2}) + S^Z(r) \tilde G_{1,2}, \qquad \Box^0 := \Box_h + r^{-2} \Delta_{\S^2}
%\]
%Since
%\[
%\Box_h = (1+ g_{\omega}(r)) (-\partial_t^2 + \partial_r^2)
%\]
%and
%\[
%\Box_{\tilde h} (r^{-1}\psi) = r^{-1} (-\partial_t^2 + \partial_r^2 + r^{-2} \Delta_{\S^2})
%\]
%we obtain that
% \begin{equation}\label{sphsym}
%   \Box_{\tilde g} (r\phi_0) \in S^Z(r) \nabla G_{1,2} + S^Z(1) G_{1,2} + S^Z(r^{-1})(\nabla\phi_0) + S^Z(r^{-2})(\phi_0) 
% \end{equation}
%
%
% In the general case, since $g_{sr}$ is of lower order, the extra terms will have better
% decay. Specifically, $F$ satisfies the system
% \[
% dF = G_1, \quad d *_{\tilde g} F = G_2 + d (*_{\tilde g} - *) F
% \]
% 
% Due to our assumption on the metric, we have that
%\[
%d (*_{\tilde g} - *) F \in S^Z(r^{-2}) (\nabla F) + S^Z(r^{-3}) (F) 
%\] 
%
% looking at \eqref{wave_form1}, \eqref{gtoMink},
% and \eqref{sphsym}, one sees that
% \[
% \Box_{g} (r \phi_0) \in S^Z(r)(G_1^{\leq 1}, G_2^{\leq 1}) +
% S^Z(r^{-1})(\nabla \phi_0) + S^Z(r^{-2})(\phi_0)+ S^Z (r^{-2})
% (\nabla F) + S^Z (r^{-3}) (F),
% \]
 
 We can now replace $\Box_g$ above by the operator $P=\Box + Q$ from \eqref{P}, and  
 the conclusion follows after commuting with elements of $Z$. 
\end{proof} 
 
 We can now apply the following lemma, proved in \cite{MTT} (Lemma 3.20) for $m=-2, 1$:
 
\begin{lemma}\label{imprvddecay}
  Consider a smooth function $f$ supported in $ \{\frac{t}2 \leq r
  \leq t \}$ so that
  \begin{equation}
    |f|+|Sf|+|\Omega f|  + \la t-r \ra |\partial_r f|  \lesssim \frac{\log^m\la t-r \ra}{t^3 \la t-r \ra}, \quad m\in\mathbb Z.
    \label{fall}\end{equation}
  and $h$ supported in $ \{0 < r_e \leq r \leq t \}$ so that
  \begin{equation}\label{hest}
  |h| \lesssim \frac{\log^m\la t-r \ra}{t r^{3} \la t-r \ra}.
  \end{equation}
  Then the forward solution $w$ to
  \[
  \Box w = \partial_t f + h
  \]
  satisfies the bound
  \begin{equation}
    | w| \lesssim \frac{\log^{m+2}\la t-r \ra}{t \la t-r \ra^2}.
    \label{fallu}\end{equation}
  \label{l:maindr}
\end{lemma}

\begin{proof}

We write $w=w_1+w_2$, where 
\[
\Box w_1 = \partial_t f, \qquad \Box w_2 = h.
\]

 Let us first bound $w_2$. We use the one-dimensional reduction \eqref{1D} and decompose $D_{tr}$ dyadically as 
 \[
D_{tr} = \bigcup_{R \leq t}  D_{tr}^R, \quad D_{tr}^R = D_{tr}\cap \{R<\rho<2R\}.
\]
For $R < (t-r)/8$, we have that $\rho\approx R$ and $\la s-\rho\ra
\approx \la t-r\ra$ in $D_{tr}^R $.  Thus \eqref{hest} implies that
\[
\int_{D_{tr}^R} \int_{\S^2} \rho |h^{\leq n}| d\omega \,ds\,d\rho
\lesssim \frac{\log^{m}\la t-r \ra}{\la t-r\ra^2}. 
\]
 On the other hand, for $R>(t-r)/8$ we obtain by \eqref{hest} (here $u=s-\rho$):
\[
\int_{D_{tr}^R} \int_{\S^2} \rho |h^{\leq n}| d\omega\,ds\,d\rho \lesssim R^{-2} \int_{0}^{t-r} \frac{\log^m\la u \ra}{\la u\ra} du \lesssim R^{-2} \log^{m+2}\la t-r\ra.
\] 
Dyadically summing in $R$ shows that 
\[|w_2|\lesssim \frac{\log^{m+2}\la t-r\ra}{r\la t-r\ra^2},\]
and using this bound in \eqref{insidedcy} yields \eqref{fallu}.

On the other hand, we can write $w_1 = \partial_t v$, where $v$ is the forward solution to $\Box v = f$. We first note that \eqref{fall} also implies that
\[
|(t \partial_i + x_i \partial_t) f| \lesssim  \frac{\log^m\la t-r \ra}{t^n \la t-r \ra}.
\]
Via the one dimensional reduction as above applied to $v$, $\nabla v$ 
$\Omega v$, $S v$ and $(t \partial_i
+ x_i \partial_t) v$ we obtain 
\[
|v|+|\nabla v| +|Sv|+|\Omega v|  + \sum_i  | (t \partial_i
+ x_i \partial_t) v|  \lesssim \frac{\log^{m+2}\la t-r\ra}{t \la t-r \ra} .
\]
The above left hand side dominates $\la t-r\ra \partial_t v$;
therefore the proof of the lemma is complete.
\end{proof}

In order to use the lemma, we recall that 
\[
\partial = \partial_t + S^Z(\frac 1r)(S, \Omega ).
\]
Let $\chi$ be a smooth cutoff selecting the region $r \leq t/2$ . We can rewrite \eqref{midcomp} as
\[
\Box (r \hat{\phi_0}^{\Lambda}) = S^Z(r)(G_1^{\leq |\Lambda| +m}, G_2^{\leq |\Lambda| +m}) + \partial_t f + h
\]
where
\[
f = (1-\chi)\Bigl(S^Z(r^{-1}) \hat{\phi_0}^{\leq |\Lambda| +m} + S^Z (r^{-2}) F^{\leq |\Lambda| +m}\Bigr)
\]
\[
h = \partial_t \Bigl(\chi(S^Z(r^{-1}) \hat{\phi_0}^{\leq
       |\Lambda| +m} + S^Z (r^{-2}) F^{\leq |\Lambda| +m})\Bigr) + S^Z(r^{-2})(\hat{\phi_0}^{\leq |\Lambda| +m}) + S^Z (r^{-3})(F^{\leq |\Lambda| +m})
\]
%
%\[\begin{split}
%\Box (r \hat{\phi_0}^{\Lambda}) \in \partial_t(1-\chi)\Bigl(S^Z(r^{-1}) \hat{\phi_0}^{\leq
%       |\Lambda| +m} + S^Z (r^{-2}) F^{\leq |\Lambda| +m}\Bigr) \\ + S^Z(r)(G_1^{\leq |\Lambda| +m},
%     G_2^{\leq |\Lambda| +m}) + \partial_t (\chi\Bigl(S^Z(r^{-1}) \hat{\phi_0}^{\leq
%       |\Lambda| +m} + S^Z (r^{-2}) F^{\leq |\Lambda| +m}\Bigr)) + S^Z(r^{-2})(\hat{\phi_0}^{\leq |\Lambda| +m}) + S^Z (r^{-3})
%     (F^{\leq |\Lambda| +m}) := \partial_t f + h 
%\end{split} \]
 Due to \eqref{midcomp}, \eqref{impcone}, \eqref{wkdecay}, and \eqref{l2toli:r} we see that
\[
|f|+|Sf|+|\Omega f| + \la t-r \ra |\partial_r f|  \lesssim \frac{1}{t^3 \la t-r \ra}, \quad |h| \lesssim \frac{1}{t r^{3} \la t-r \ra}.
\] 
We can now use Lemma~\ref{imprvddecay} (with $m=0$),  to improve the
bounds on $\hat{\phi_0}$  to
\begin{equation}\label{impcone2}
  |\hat{\phi_0}^{\Lambda}| \lesssim \kappa \frac{\log^2 \la t-r \ra}{r \la t \ra \la t-r \ra^2}.
\end{equation} 
 We can use the new bound \eqref{impcone2} to improve the decay of $f$ and $h$ to
\[
|f|+|Sf|+|\Omega f| + \la t-r \ra |\partial_r f|  \lesssim \frac{1}{t^3 \la t-r \ra\log^2\la t-r\ra}, \quad |h| \lesssim \frac{1}{t r^{3} \la t-r \ra\log^2\la t-r\ra}.
\]  
By applying Lemma~\ref{imprvddecay} (with $m=-2$) we can remove the logarithm in \eqref{impcone2}, and obtain the desired estimate for
$\phi_0$ near the cone:
\begin{equation}\label{impcone3}
  |\phi_0^{\Lambda}| \lesssim \kappa \frac{1}{r \la t \ra \la t-r \ra^2}.
\end{equation} 
 We will later improve in the region $r\leq t/2$ by replacing the $r$ factor in the denominator by a $t$ factor.

Note that \eqref{impcone3} implies, in conjunction with
\eqref{l2toli:u}, that in the region $\{r>t/2\}$ we have
\begin{equation}\label{derivest}
  |\nabla \phi_0^{\Lambda}| \lesssim \kappa \frac{1}{r^2  \la t-r \ra^3}.
\end{equation}
Since $\phi_{-}$ is a one-form on the sphere, Hodge theory tells us
that
\[
\|\phi_{-}\|_{L^2(\S^2)} \lesssim \|\ang \phi_{-}\|_{L^2(\S^2)}.
\]
On the other hand, part of the Maxwell system gives that
\begin{equation}\label{Maxpart}
\nabla^{\alpha }F_{\alpha u} \in S^Z(1) G_2, \qquad \nabla_{[u}F_{AB]} \in S^Z(1) G_1
\end{equation}
 which in turn implies that
\[
\|\ang \phi_{-}\|_{L^{\infty}(|x|=R)} \lesssim \|\partial_u
\phi_0\|_{L^{\infty}(|x|=R)} + \frac1R \|F\|_{L^{\infty}(|x|=R)} +
\sum_{i=1}^2 \|G_i\|_{L^{\infty}(|x|=R)}.
\]
After taking derivatives in \eqref{Maxpart} and applying Sobolev embeddings on the
sphere of radius $r$, we obtain the desired bound for $F_{uA}$ near the cone:
\begin{equation}\label{impcone4}
|F_{uA}^{\Lambda}| \lesssim \kappa \frac{1}{\la r\ra \la t-r\ra^{3}}.
\end{equation}

To complete the proof of the peeling estimates near the cone, we note that the Maxwell system, in particular the equations
\[
\nabla_{[u} F_{vA]} = (G_1)_{uvA}, \qquad \nabla^{\alpha} F_{\alpha A} =-
*(G_2)_{A}
\]
combined with the previous bounds for $\phi_{-}$ and $\phi_0$ imply
that
\[
|\partial_u F_{vA}^{\Lambda}| \lesssim \kappa \frac{1}{r^2 \la t \ra
  \la t-r \ra^2}.
\]
After integration on constant $v$ slices, we obtain the desired bound for
$F_{vA}$ near the cone:
\begin{equation}\label{impcone5}
|F_{vA}^{\Lambda}| \lesssim \kappa \frac{1}{r \la t \ra \la t-r \ra^2}.
\end{equation}

Finally, we note that \eqref{impcone3}, \eqref{impcone4} and \eqref{impcone5} imply that $|F^{\leq n}| \lesssim \kappa \frac{1}{\la r \ra 
\la  t-r \ra^3}$. Taking into account \eqref{insidedcy}, we can replace the $r$ factor in the denominator with a
$t$ factor and conclude the proof. One can also easily see that \eqref{pointwiseest1} follows from the proof of Lemma~\ref{impradpart} by using the decay of $F$ just proved.

\end{document}